\title{Complexity Framework For Forbidden Subgraphs V: Beyond Simple Graphs} 
\titlerunning{Complexity Framework For Forbidden Subgraphs V} 
\author{Tala Eagling-Vose}{Department of Computer Science, Durham University, UK}{tala.j.eagling-vose@durham.ac.uk}{https://orcid.org/0009-0008-0346-7032}{}
\author{Barnaby Martin}{Department of Computer Science, Durham University, UK}{barnaby.d.martin@durham.ac.uk}{https://orcid.org/0000-0002-4642-8614}{}
\author{Daniel Paulusma}{Department of Computer Science, Durham University, UK}{daniel.paulusma@durham.ac.uk}{https://orcid.org/0000-0001-5945-9287}{}
\author{Siani Smith}{Department of Computer Science, Loughborough University, UK}{siani.smith@lboro.ac.uk}{https://orcid.org/0000-0003-0797-0512}{}
\authorrunning{T. Eagling-Vose, B. Martin, D. Paulusma, S. Smith} 
\keywords{graph theory, forbidden subgraph, complexity dichotomy, surjective homomorphisms}
\newcommand{\SurHom}{\ensuremath{\mathrm{SurHom}}}
\begin{document}

\maketitle             



\begin{abstract}
    We continue the study of the recently-introduced C123-framework, for (simple) graph problems restricted to inputs specified by the forbidding of some finite set of subgraphs, to more general graph problems possibly involving multiedges and self-loops. We study specifically the problems {\sc Multigraph Matching Cut}, {\sc Multigraph $d$-Cut} and {\sc Partially Reflexive Stable Cut} in this connection. The last may be seen as a Surjective Homomorphism problem to a path $P_3$ in which both leaves are looped while the interior vertex is loopless. We consider also another family of Surjective Homomorphism problems to a cycle in which only one vertex is loopless.
    
    When one forbids a single (simple) subgraph, our first three problems exhibit the same complexity behaviour as C123-problems, but on finite sets of forbidden subgraphs, the classification appears more complex. While {\sc Multigraph Matching Cut} and {\sc Multigraph $d$-Cut} have the same classification as C123-problems, already {\sc Partially Reflexive Stable Cut} fails to have. This is witnessed by forbidding as subgraphs both $C_3$ and $H_1$ (which puts {\sc Partially Reflexive Stable Cut} in P). Indeed, the difference of behaviour occurs only around pendant subdivisions of nets and pendant subdivisions of $H_1$. We examine this area in close detail.

    Our other Surjective Homomorphism, ostensibly somewhat similar to {\sc Partially Reflexive Stable Cut}, behaves very differently when the input is restricted to some class that is $H$-subgraph-free. For example, it is solvable in polynomial time on any class of bounded degree. Also, its hardness will never be preserved under any form of edge subdivision.
\end{abstract}

\section{Introduction}\label{s-intro}

In this article, we consider cut problems and their variants. 
{\sc Matching Cut} asks 
for a {\it matching cut}, i.e., an edge cutset that is a matching (so, each vertex has at most one neighbour on the other side of the cut),  while {\sc Stable Cut} asks for a
{\it stable cut}, i.e., a vertex cutset that is a stable (independent) set. 
It is readily seen that a connected graph with no vertex of degree~$1$ has a matching cut if and only if its line graph has a stable cut. 
{\sc Matching Cut}, and thus also {\sc Stable Cut}, is well known to be
NP-complete~\cite{Chvatal1984}.
For $d\geq 1$, {\sc $d$-Cut} asks for an edge cutset
such that
each vertex has at most $d$ neighbours on the other side of the cut.  Thus, {\sc Matching Cut} and {\sc $1$-cut} are the same.

A \emph{homomorphism} from $G$ to $H$ is a function $h:V(G)\rightarrow V(H)$ so that, for all $(x,y) \in E(G)$, we have $((h(x),h(y)) \in E(H)$. A homomorphism is \emph{surjective} if $h$ is a surjective function. Note that this definition also makes sense for graphs with loops permitted. The problem  {\sc Partially Reflexive Stable Cut} is exactly the problem to determine if an input graph has a surjective homomorphism to $P_3^{101}$, where $P_3^{101}$ is the path on three vertices in which both leaves are looped and the interior vertex is loopless. Let us term $\SurHom(H)$ to be the problem defined by graph template $H$ which asks if an input graph $G$ has a surjective homomorphism to $H$. Such problems are well-studied in the literature \cite{GolovachPaulusmaSong,MartinP15} (see also the survey \cite{BodirskyKM12}). Among the most famous of these problems is $\SurHom(C^*_4)$ where, $C^*_4$ is the reflexive $4$-cycle. This problem is also known as {\sc Disconnected Cut} (and the complement of {\sc $2K_2$-Partition}) and was prominent in the literature due to its open complexity until it was settled as NP-complete in \cite{MartinP15}\footnote{Note that Narayan Vikas has an independent proof which is unpublished.}. The complexity of $\SurHom(C^*_k)$ is still not settled for $k \in \{5,6\}$, but for all $k \geq 7$ it is known to be NP-complete \cite{Russian}. Suppose we take a reflexive $k$-cycle and make one of the vertices loopless. Let us call this graph $C'_k$. Note that $\SurHom(C'_k)$ is in P when $k\leq 4$ (this is a consequence of the polynomial algorithm for the retraction problem, as appears \mbox{e.g.} in \cite{Pseudoforests}). It seems $\SurHom(C'_k)$ might behave in a fashion somewhat between $\SurHom(C^*_k)$ and $\SurHom(P_3^{101})$; but it has hitherto not been studied in the literature. We remedy this situation by proving $\SurHom(C'_k)$ is NP-complete for $k \geq 6$.

We study these problems for {\it monotone} graph classes, i.e., classes of graphs that are closed under vertex and edge deletion. Note that many natural graph classes such as planar graphs or bipartite graphs are monotone. Say, for a family of graphs~${\cal H}$, a graph is {\it ${\cal H}$-subgraph-free} if it contains no graph from ${\cal H}$ as a subgraph. 
The recently introduced C123-framework~\cite{JMOPPSL25} (see also \cite{LozinMPPSSL24,JMPPSL23,BodlaenderJMOPPSL25}) considered ${\cal H}$-subgraph-free graphs for finite families ${\cal H}$ and made explicit various ideas that had been largely implicit in the literature (see e.g.~\cite{AK92,GP14,Ka12}). The idea is that if some graph problem $\Pi$ possesses certain properties, then the complexity of $\Pi$ restricted to the class of ${\cal H}$-subgraph-free graphs, for some finite set of graphs ${\cal H}$, can be completely classified as ``hard'' or ``easy'' according to some overarching theorem (at which we will soon alight).

 A graph is {\it subcubic} if every vertex has degree at most~$3$. For $p\geq 1$, the {\it $p$-subdivision} of an edge $e=uv$ of a graph~$G$ replaces $e$ by a path of $p+1$ edges with endpoints $u$ and~$v$. 
The {\it $p$-subdivision} of a graph~$G$ is the graph obtained from~$G$ after $p$-subdividing each edge.
For a graph class ${\cal G}$ and an integer~$p$, let ${\cal G}^p$ be the class consisting of the $p$-subdivisions of the graphs in ${\cal G}$.
 A graph problem $\Pi$ is ``hard'' {\it under edge subdivision of subcubic graphs} if for every $j \geq 1$ there is an~$\ell \geq j$ such that:
if $\Pi$ is ``hard'' for the class ${\cal G}$ of subcubic graphs, then $\Pi$ is ``hard'' for ${\cal G}^{\ell}$.
We can now say that a graph problem~$\Pi$ has property:\\[-9pt]
\begin{itemize}
\item {C1} if $\Pi$ is ``easy'' for every graph class of bounded tree-width;\\[-12pt]
\item {C2} if $\Pi$ is ``hard'' for subcubic graphs ($K_{1,4}$-subgraph-free graphs, where $K_{1,4}$ is the $5$-vertex star);\\[-12pt]
\item {C3} if $\Pi$ is  ``hard'' under edge subdivision of subcubic graphs;\\[-9pt]
\end{itemize}
We say that $\Pi$ is a C123-problem if it satisfies C1, C2 and C3. Let the set ${\cal S}$  consist of all graphs, in which every connected component is either a path or a subcubic tree with exactly one vertex of degree~$3$.  In \cite{JMOPPSL25},  25 C123-problems were identified, and the following meta-classification was shown:

\begin{theorem}[Theorem 3 in \cite{JMOPPSL25}]\label{t-dicho}
For any \emph{finite} set of graphs ${\cal H}$, a \emph{C123}-problem $\Pi$  is ``easy'' on ${\cal H}$-subgraph-free graphs if ${\cal H}$ contains a graph from ${\cal S}$, or else it is ``hard''.
\end{theorem}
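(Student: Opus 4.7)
The plan is to prove both directions of the dichotomy separately: bounded tree-width on the easy side combined with property C1, and a subdivision-based reduction powered by C2 and C3 on the hard side.

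For the easy direction, fix $H \in {\cal H} \cap {\cal S}$. Every ${\cal H}$-subgraph-free graph is in particular $H$-subgraph-free, so by C1 it suffices to show that the class of $H$-subgraph-free graphs has tree-width bounded by some function of $H$. Since each connected component of $H$ is a path or a subdivided claw, the bound can be obtained from an excluded-subgraph lemma: if $G$ has sufficiently large tree-width then, via the Robertson--Seymour grid theorem and the easy observation that large walls contain many pairwise-disjoint long paths and long subdivided claws as subgraphs, $G$ must contain a vertex-disjoint copy of every component of $H$ at once.

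For the hard direction, suppose no element of ${\cal H}$ lies in ${\cal S}$. The plan is to exhibit a hard subclass inside the ${\cal H}$-subgraph-free class by taking subcubic instances (hard by C2), subdividing each edge $\ell$ times for some large $\ell$ (hardness preserved by C3), and checking that no $H \in {\cal H}$ survives as a subgraph. Any $H \notin {\cal S}$ must display at least one of the obstructions (i) a vertex of degree $\geq 4$, (ii) a cycle, or (iii) a connected component with two vertices of degree~$3$. These are killed in $G^\ell$ respectively because: (i) $G^\ell$ stays subcubic; (ii) the girth of $G^\ell$ is at least $\ell + 1$; and (iii) the only vertices of degree $\geq 3$ in $G^\ell$ are the original vertices of $G$, so the two degree-$3$ vertices of $H$ embed into distinct originals, yet every simple path in $G^\ell$ between two distinct originals has length an integer multiple of $\ell + 1$, hence at least $\ell + 1$, so cannot realise the fixed-length path prescribed by $H$ once $\ell$ exceeds $|V(H)|$. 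Finiteness of ${\cal H}$ makes it possible to choose a single $\ell$ that works for every member.

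The main obstacle I expect is the tree-width bound on the easy side when $H$ is disconnected: one needs to argue that excluding a fixed disjoint union of paths and subdivided claws as subgraph already caps tree-width, which demands an Erd\"os--P\'osa-flavoured ingredient guaranteeing that large tree-width supplies disjoint copies of every component simultaneously. The subdivision-side ingredients---the divisibility statement about path lengths in $G^\ell$, the girth growth, and the degree argument---are essentially routine once the structural picture of $G^\ell$ (original vertices joined by length-$(\ell+1)$ paths of degree-$2$ subdivision vertices) is laid out, and no problem-specific feature of $\Pi$ beyond C1, C2, C3 is invoked anywhere.
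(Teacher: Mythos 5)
This theorem is not proved in the present paper at all---it is imported verbatim from the framework paper \cite{JMOPPSV}---and your sketch reproduces exactly the argument given there: the easy side rests on the structural fact that ${\cal H}$-subgraph-free graphs have bounded tree-width whenever ${\cal H}$ meets ${\cal S}$ (obtained from a large wall subdivision, which supplies disjoint copies of every path/subdivided-claw component simultaneously), combined with C1, while the hard side observes that for $\ell$ large relative to $\max_{H\in{\cal H}}|V(H)|$ the $\ell$-subdivisions of subcubic graphs avoid every $H\notin{\cal S}$ via precisely your three obstructions, and C2 plus C3 transfer hardness to that subclass. The only points to watch are that C3 lets you choose some $\ell\geq j$ rather than an arbitrary $\ell$ (harmless, since your girth and distance bounds are monotone in $\ell$), and that the tree-width lemma you defer is genuinely the technical heart of the cited proof rather than a routine afterthought.
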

\noindent In general, what constitutes ``easy'' and ``hard'' depends on the context. In \cite{JMOPPSL25}, it is usually P versus NP-complete, though sometimes it is almost linear versus quadratic. In \cite{EVMPS24}, it is sometimes P versus complete for some level in the Polynomial Hierarchy. In this article, it will always be P versus NP-complete; see \cite{Leeuwen25} for a recent survey around the C123-framework.

While several related problems, such as
{\sc Perfect Matching Cut}~\cite{LT22} and {\sc Matching Multicut}~\cite{GJMS24}, are NP-complete on subcubic instances, 
both {\sc Matching Cut} and {\sc Stable Cut} are in P for subcubic graphs~\cite{Chvatal1984,Moshi89}\footnote{We note that \cite{CY02} proves a stronger structural result, that every simple graph with at most $2n-4$ edges has a stable cut.}.
Hence, both problems are not C123-problems when considered on simple graphs, as they violate C2. This also means that
in simple graphs, {\sc Matching Cut} and {\sc Stable Cut} satisfy C3 trivially, and {\sc Matching Cut}, though 
not {\sc Stable Cut}, has appeared in \cite{JMPPSL23} discussing C13-problems (those that satisfy just C1 and~C3). However, neither satisfies a version of C3 where NP-hardness of not necessarily subcubic instances is preserved under edge subdivision. In this case, edge subdivision will eventually make all instances into trivial yes-instances.

 As we will now argue, both {\sc Matching Cut} and {\sc Stable Cut} make sense on slightly richer graph structures, possibly allowing multi-edges or self-loops (we call such graphs {\it multigraphs} or {\it partially reflexive}, respectively). Specifically, we can define {\sc Matching Cut} and {\sc $d$-Cut} on multigraphs and {\sc Stable Cut} on partially reflexive graphs. 

In {\sc Matching Cut} a multi-edge can never be in the cutset, while in {\sc Stable Cut} a looped vertex can never be in the cutset. One may deduce the latter from a definition  that we do not allow looped vertices to be members of a stable set in a partially reflexive graph. A natural way to explain the role of loops in {\sc Stable Cut} comes from considering it in its guise as the surjective homomorphism problem to the path $P_3^{101}$ on three vertices, the middle of which is loopless, while the outer two are looped. A {\it surjective homomorphism} from a graph $G$ to a graph $H$ is a function $f:V(G)\to V(H)$ such that for all $(u,v)\in E(G)$, $(f(u),f(v))$ belongs to $H$, and moreover, for every $x\in V(H)$ there exists a vertex $u\in V(G)$ with $f(u)=x$.  Hence, the surjective homomorphisms of an input graph $G$ to $P_3^{101}$ are precisely the stable cuts, where the cutset is read from the inverse image of the loopless vertex. Thus the stable cut must involve only loopless vertices. This realisation of {\sc Stable Cut} as a surjective homomorphism problem 
appeared in \cite{GolovachPaulusmaSong}. 
Note that there are no trivial self-reductions from our problems, on multigraphs or partially reflexive graphs, to simple graphs (e.g., by contracting multiedges). It is instead possible to simulate multiedges by the judicious introduction of cliques and the growing of the vertex set (such an approach is used in \cite{GolovachPaulusmaSong} and will appear later in Proposition~\ref{prop:multi-d-cut-bounded-treewidth}). This destroys the natural structure of the input. Consequently, these more general versions of our problems are perfectly natural in and of themselves.
In order to avoid confusion with the problems on simple graphs, we will speak henceforth of {\sc Multigraph Matching Cut}, {\sc Multigraph $d$-Cut} and {\sc Partially Reflexive Stable Cut} to emphasise that we allow the correspondingly enriched structures.
 
The question arises how to judge degree in a multigraph or a partially reflexive graph. From the perspective of the C123-framework, the hardness on subcubic instances is required to produce hardness on $K_{1,4}$-subgraph-free instances, so multi-edges (after they are counted once) should not add to degree and neither should loops. Additionally, the C123-framework only says something about what happens when a finite set of simple graphs is omitted and, for now, let us leave it at that. Let us note that our choices are not arbitrary in order to fit into the C123-framework. Rather, they are what is required for the problems on partially reflexive multigraphs to be studied under the forbidding of simple subgraphs.

 
Our work begins by making the relatively straightforward observations that {\sc Multigraph Matching Cut}, {\sc Multigraph $d$-Cut} and {\sc Partially Reflexive Stable Cut} are NP-complete on subcubic instances (for all $d\geq 1$). The question then arises, as to whether they are C123-problems. We prove that {\sc Multigraph Matching Cut} and {\sc Multigraph $d$-Cut} have the same classification as C123-problems, while {\sc Partially Reflexive Stable Cut} is not a C123-problem, yet has the same complexity as one over $H$-subgraph-free graphs (\mbox{i.e.} when just one subgraph is forbidden). In fact, we do not have an example of the latter behaviour for problems on just simple graphs. In the case of {\sc Partially Reflexive Stable Cut}, we make use a weaker version of C3. We can subdivide edges $(u,v)$ only when one of those vertices is reflexive, and then all the newly introduced vertices are reflexive. Note that our results imply a posteriori that the full C3 condition can not hold.

The behaviour of {\sc Disconnected Cut} on $H$-subgraph-free graphs is not well-understood, though it is clear that it behaves equivalently on irreflexive, partially reflexive and reflexive inputs. The problem is open even in the $K_4$-(subgraph-)free case. We believe it is perfectly possible that {\sc Disconnected Cut} is in P when restricted to $H$-(subgraph-)free graphs for all $H$. However, we are able to prove that $\SurHom(C'_k)$ remains NP-complete for $K_{3}$-(subgraph-)free graphs, while being solvable in polynomial time for any class of bounded degree. {\sc Partially Reflexive Stable Cut} is NP-complete on subcubic graphs and {\sc Disconnected Cut} is in polynomial time for $K_3$-subgraph-free graphs \cite{FleischnerMujuniPaulusmaSzeider}. Thus, we see that $\SurHom(C'_k)$ behaves differently from both. This then is our motivation for studying $\SurHom(C'_k)$, a problem somewhat between {\sc Partially Reflexive Stable Cut} and {\sc Disconnected Cut} (see Table~\ref{tab:2}).

We summarise our results as Theorems~\ref{t-1}--\ref{t-4}, which we prove in Section~\ref{s-1}--\ref{s-new}, respectively (we formulate the case $d=1$ for {\sc Multigraph $d$-Cut}
as a separate theorem for {\sc Multigraph Matching Cut}). For reasons of space, some proofs will be deferred to the appendix.

\begin{theorem}\label{t-1}
    Let ${\cal H}$ be a finite set of simple graphs. Consider {\sc Multigraph Matching Cut} restricted to ${\cal H}$-subgraph-free instances. If ${\cal H}$ contains a graph from ${\cal S}$, then this problem is in P; else it is NP-complete.
    \label{thm:matching-main}
\end{theorem}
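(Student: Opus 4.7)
The plan is to show that Multigraph Matching Cut is a C123-problem and then invoke Theorem~\ref{t-dicho}; the statement of the theorem then follows immediately. Three properties, C1, C2 and C3, must be verified.

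Property C1 follows by a standard argument: a matching cut is a non-trivial bipartition in which every vertex has at most one neighbour on the opposite side, and the additional restriction that multi-edges lie within a single side is expressible by a monadic second-order formula. Since multi-edges do not affect the treewidth of the underlying simple graph, Courcelle's theorem (or a direct dynamic programme over a tree decomposition) gives a linear-time algorithm on classes of bounded treewidth. For property C2 we reduce from a known NP-hard problem, e.g.\ Matching Cut on graphs of maximum degree $4$~\cite{Chvatal1984}, exploiting multi-edges as ``forced non-cut'' edges: wherever a simple-graph reduction would need an auxiliary gadget to prevent an edge from lying in the cut, we instead use a multi-edge, which under the stated degree convention keeps the instance subcubic while automatically forbidding that edge in any matching cut.

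Property C3 is the main technical point. For every $j\geq 1$ we must exhibit an $\ell\geq j$ for which Multigraph Matching Cut remains NP-hard on $\ell$-subdivisions of subcubic multigraphs. We choose $\ell$ to be a sufficiently large even integer and reduce from the C2 instances by the subdivision operation itself. The forward direction is routine: given a matching cut $(A,B)$ of $G$, on each subdivision path of an edge $uv$ place the $\ell/2$ new vertices nearest $u$ on $u$'s side and the other $\ell/2$ on $v$'s side, giving a single cut edge at the middle of each originally-cut path and none along non-cut paths. The reverse direction is the obstacle: subdivision can in general manufacture spurious matching cuts (for instance $K_3$ has no matching cut while $K_3^\ell = C_{3\ell+3}$ does for every $\ell\geq 1$). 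We overcome this by arranging the C2 reduction to output sufficiently rigid instances---using multi-edge-enforced blocks together with pendant flags at critical vertices---so that in any matching cut of $G^\ell$ the sign sequence along every subdivision path consists of at most two monotone runs, whence restriction to original vertices is a matching cut of $G$.

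Once C1, C2 and C3 are established, Theorem~\ref{t-dicho} applied to Multigraph Matching Cut delivers exactly the stated dichotomy: polynomial when $\mathcal{H}$ contains a graph from $\mathcal{S}$, and NP-complete otherwise.
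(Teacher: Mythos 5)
Your overall strategy (verify C1--C3 and then apply Theorem~\ref{t-dicho}) is exactly the paper's, and your treatments of C1 and C2 are essentially in line with it. The gap is in C3. You propose to subdivide the C2 instances and recover a matching cut of $G$ by restricting a matching cut of $G^\ell$ to the original vertices, asserting that ``multi-edge-enforced blocks and pendant flags'' force every subdivision path into at most two monotone colour runs. Even granting that assertion, the conclusion does not follow: if the unique colour change on the path from $u$ to $v$ occurs at an edge not incident to $u$, then $u$'s neighbour on that path carries $u$'s own colour, so $u$ may disagree with two or even three of its original neighbours while still having at most one subdivision-neighbour across the cut; the restriction to $V(G)$ is then not a matching cut of $G$. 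The obvious repair --- leaving one single (non-doubled) edge at each end of every subdivision path so that both endpoints ``see'' the disagreement --- introduces spurious solutions: colouring the doubled middle block of a single path oppositely to the rest of the graph is a valid matching cut of $G^\ell$ whether or not $G$ has one. So the reverse direction, which you yourself flag as the obstacle, is not actually overcome by the devices you name.

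The paper avoids this trap by not subdividing a {\sc Matching Cut} instance at all. It first proves NP-completeness of {\sc NAE 3-SAT 0-1} (NAE 3-SAT where the all-true and all-false assignments are discounted), and then, for each $k$, builds a bespoke subcubic multigraph: variable gadgets are paths made entirely of double edges (hence forced monochromatic), and each clause adds one vertex $v'_{j_1}$ joined by a double edge to the negated variable's path and by exactly two single edges to the other two variables' paths; after $k$-subdividing, every new edge is doubled except the first edge on each of those two single-edge paths. All matching constraints are thereby concentrated at the clause vertices, where ``at most one neighbour on the other side'' encodes precisely the not-all-equal condition, making the reverse direction immediate. To close your C3 argument you would need to supply a concrete construction of this kind rather than subdividing and restricting.
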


\begin{theorem}\label{t-2}
    Let $d\geq 2$ and ${\cal H}$ be a finite set of simple graphs. Consider {\sc Multigraph $d$-Cut} restricted to ${\cal H}$-subgraph-free instances. If ${\cal H}$ contains a graph from ${\cal S}$, then this problem is in P; else it is NP-complete.
    \label{thm:d-main}
\end{theorem}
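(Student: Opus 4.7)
My plan is to derive Theorem~\ref{t-2} from Theorem~\ref{t-dicho} by verifying, for each fixed $d\geq 2$, that {\sc Multigraph $d$-Cut} is a C123-problem. Once that is done, Theorem~\ref{t-dicho} immediately delivers the claimed dichotomy.

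Properties~C1 and~C2 are quick. For~C1 I would express the existence of a $d$-cut in $\mathrm{MSO}_2$ over the incidence structure of the input multigraph: quantify a non-trivial bipartition $V=A\cup B$ and require that no parallel edge straddles the cut and that each vertex has at most $d$ cut-edges incident. Courcelle's theorem then gives a linear-time algorithm on every class of bounded tree-width (a direct dynamic programme over a tree decomposition works equally well). For~C2, NP-hardness on subcubic multigraphs, I would reduce from {\sc $d$-Cut} on unrestricted simple graphs, replacing every high-degree vertex by a small gadget built from double edges. Because a double edge can never lie in a $d$-cut, its endpoints are forced into a common partition class, so the gadget faithfully simulates the original vertex while keeping every gadget vertex of effective degree at most three. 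This is the same template used for the $d=1$ case in Theorem~\ref{t-1}, and it carries over unchanged.

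The main obstacle is~C3. Given $j\geq 1$, I aim to find $\ell\geq j$ such that $G$ has a $d$-cut iff $G^{\ell}$ does. The forward direction is routine: given a $d$-cut $(A,B)$ of $G$, place each interior subdivision vertex on the side of the nearer endpoint, so that a single side-switch occurs along each subdivided cut edge and every degree-$2$ subdivision vertex carries at most one cut-edge. The backward direction is where the subtlety lies, because for $d\geq 2$ a subdivision vertex may legitimately carry two cut-edges, so the cut pattern along a subdivided path $u - v_1 - \cdots - v_\ell - v$ is no longer rigid, as it is in the matching-cut setting. I would handle this by a normalisation argument that slides any interior cut-edge along a subdivided path towards a prescribed endpoint, preserving the $d$-cut property at every step, until each subdivided path collects its cut-edges at one end. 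The restriction of the normalised partition to $V(G)$ is then a $d$-cut of $G$; non-triviality of the restriction can be ensured by attaching a small anchor gadget to the source instance that forces at least one vertex on each side. Choosing $\ell$ from an infinite arithmetic progression along which the normalisation respects the required parity finally yields the desired $\ell\geq j$.
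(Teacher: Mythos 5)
Your overall strategy (verify C1, C2, C3 and invoke Theorem~\ref{t-dicho}) is the paper's, and your C1 argument is fine, but both hardness properties have genuine gaps. First, a concrete error in C2: for $d\geq 2$ a double edge \emph{can} lie in a $d$-cut, since it contributes only $2\leq d$ cut-edges to each endpoint. So double edges do not force their endpoints onto the same side, and the $d=1$ template does \emph{not} carry over unchanged; to force two vertices together you need an edge of multiplicity $d+1$, and to simulate a ``cuttable'' edge that saturates a vertex you want multiplicity $d$. This is exactly the scaling the paper performs: it takes the $k$-subdivided subcubic instances $G'$ already built for {\sc Multigraph Matching Cut} and replaces multiplicity $1$ by $d$ and multiplicity $2$ by $d+1$, so that a vertex can have at most one multiplicity-$d$ edge across the cut, mirroring the matching-cut condition. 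That single substitution delivers C2 and C3 simultaneously.

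The deeper problem is your C3 plan. The equivalence ``$G$ has a $d$-cut iff $G^{\ell}$ does'' is false when subdivision introduces \emph{simple} path edges: a crossing hidden in the middle of a long subdivided path is charged only to two degree-$2$ interior vertices, so a subcubic vertex $u$ of $G$ can have all three of its incident paths switch sides in $G^{\ell}$ without any constraint being violated, yet the restriction to $V(G)$ gives $u$ three neighbours across the cut, which is not a $d$-cut of $G$ for $d=2$. Your sliding normalisation cannot repair this: pushing a crossing towards an endpoint $u$ creates a cut-edge at $u$ and may immediately exceed $u$'s budget of $d$, so the $d$-cut property is not preserved step by step, and keeping the crossings in the middle preserves validity in $G^{\ell}$ but does not yield a valid cut of $G$. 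The paper sidesteps all of this by never subdividing with simple edges: the new edges along a subdivided path are given multiplicity $d+1$ (forcing the whole path rigidly onto one side) except for the designated single edges of multiplicity $d$ where a crossing is permitted, so the $\ell$-subdivided instance is equivalent to the original by construction. Without multiedges on the subdivision paths, C3 for {\sc Multigraph $d$-Cut} would fail for the same reason it fails for simple {\sc $d$-Cut}.
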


\begin{theorem}\label{t-3}
    Let $H$ be a simple graph. Consider {\sc Partially Reflexive Stable Cut} restricted to $H$-subgraph-free instances. If $H \in {\cal S}$, then this problem is in P; else it is NP-complete. However, there exists a set ${\cal H}$ of two simple graphs, neither of which is in ${\cal S}$, such that  {\sc Partially Reflexive Stable Cut} on ${\cal H}$-subgraph-free instances is in~P. 
    \label{thm:stable-main}
\end{theorem}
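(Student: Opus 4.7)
I plan to prove the three assertions of Theorem~\ref{t-3} in turn. The single-forbidden-subgraph dichotomy is established by adapting the C123 template of Theorem~\ref{t-dicho}: I verify C1 and C2 directly for {\sc Partially Reflexive Stable Cut}, and substitute a bespoke subdivision argument for the missing property C3, exploiting the weakened subdivision rule available under partial reflexivity. The third assertion requires an independent combinatorial-algorithmic argument based on the structural restrictions imposed by a carefully chosen pair of forbidden subgraphs.

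For the easy direction of the dichotomy (the case $H \in {\cal S}$), I would first verify that {\sc Partially Reflexive Stable Cut} satisfies C1. The existence of a stable cut is expressible in monadic second-order logic: one quantifies existentially over a subset $A$ of the vertex set, asserts that both $A$ and its complement are nonempty, and stipulates that no vertex incident to an edge crossing the cut is looped. By Courcelle's theorem this yields polynomial-time solvability on any class of bounded treewidth. The easy direction of Theorem~\ref{t-dicho} then applies, using only its structural ingredient (that $H$-subgraph-free graphs for $H \in {\cal S}$ have bounded treewidth) together with C1.

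For the hard direction (the case $H \notin {\cal S}$), I would start from the observation that {\sc Partially Reflexive Stable Cut} is NP-complete on subcubic instances (C2). The reduction template is then the standard one: case-split on whether $H$ has a vertex of degree at least four (in which case subcubic hardness suffices directly), $H$ is subcubic but contains a cycle (in which case sufficiently iterated subdivisions of the hard instance destroy every short cycle and hence every copy of $H$), or $H$ is a subcubic tree with at least two vertices of degree three (in which case subdivision forces long paths between branch vertices). The subtlety lies in C3: for our problem the weakened subdivision rule permits subdividing only edges that already have a looped endpoint, with each new internal vertex itself being looped. The main obstacle is to design a preprocessing step that converts the subcubic hard instance into an equivalent instance in which every edge earmarked for future subdivision already has a looped endpoint, so that the weak subdivision operation becomes available where it is needed; this can be done by attaching short looped gadgets locally, but one must check that these gadgets do not themselves introduce unwanted copies of $H$ as a subgraph.

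For the third assertion I would take ${\cal H}=\{C_3,H_1\}$, as suggested by the abstract, where $H_1$ is the small graph whose pendant subdivisions, together with those of nets, form the exceptional configurations. The plan is to prove a structural lemma stating that every $\{C_3,H_1\}$-subgraph-free graph decomposes into pieces in which the interaction between looped and loopless vertices is so constrained that the admissible cut on each piece is determined by polynomially many local decisions, and then to combine the pieces by dynamic programming. The principal obstacle is the structural lemma itself: one must argue that, once both $C_3$ and $H_1$ are forbidden, none of the problematic pendant-subdivided-net or pendant-subdivided-$H_1$ configurations identified in the introduction can arise, and that every remaining configuration admits a bounded-complexity description compatible with the stable-cut constraint.
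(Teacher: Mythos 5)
Your overall architecture (C1 for the easy case, a case analysis on $H$ for hardness, and a separate algorithm for the exceptional set ${\cal H}$) matches the paper's, and the easy direction via MSO and Courcelle is sound. The genuine gap is in the hard direction. Under the weakened subdivision rule an edge is subdividable only if it already has a looped endpoint, and your proposed preprocessing --- ``attaching short looped gadgets locally'' --- cannot create this property for an edge both of whose endpoints are loopless: attaching material elsewhere loops neither endpoint, and replacing a loopless vertex by a looped one changes the instance. So if the base subcubic hard instance contains a short cycle, or a short path between two branch vertices, all of whose edges join loopless vertices, the subdivision step is simply unavailable where you need it. The paper does not repair a generic subcubic instance; it builds bespoke reductions from {\sc NAE 3-SAT 0-1} (Theorem~\ref{thm:siani-main}) in which the loopless vertices are confined to constant-size clause gadgets and everything else is a reflexive tree whose edges are pre-subdivided $q$ times, with $q$ chosen as a function of $H$. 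Note also that your own third assertion forces a bifurcation your plan does not acknowledge: since the problem is in P once both $C_3$ and $H_1$ are excluded, no single family of hard instances can avoid both, so one construction must be triangle-free but contain $H_1$ (used when $H$ contains a cycle) and another must be $H_1$-free but contain triangles (used when $H$ is a subcubic forest with a component having two degree-$3$ vertices). Designing these two gadget families and verifying exactly which subgraphs each omits is the real content of the hardness proof, and it is absent.

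For the third assertion your choice ${\cal H}=\{C_3,H_1\}$ is valid (it reduces to the paper's $\{H_1,N_{1,1,1}\}$ case, since a triangle-free graph contains no net), but the ``decomposition into pieces plus dynamic programming'' is not an argument, and you give no reason it would succeed. The paper's mechanism is different and much more rigid: after polynomial-time local reductions (Lemmas~\ref{lem:gen-obs} and~\ref{lem:H-obs}), every surviving irreflexive vertex has degree at least $3$ and, by $H_1$-freeness, lies in a triangle with another irreflexive vertex; these triangles are vertex-disjoint with degree-$2$ outside-neighbours, so excluding the net (a fortiori excluding $C_3$) bounds the number of irreflexive vertices by a constant. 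Since any stable cutset consists of irreflexive vertices, one brute-forces over all subsets of them; in the triangle-free case the irreflexive vertices in fact form an independent set and a stable cut exists iff that set is itself a cutset. You would need to find and prove a structural lemma of this kind; the generic DP framing does not substitute for it.
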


\begin{theorem}\label{t-4}
For every $k\geq 6$, $\SurHom(C'_k)$ is NP-complete even for $K_{3}$-subgraph-free graphs. However, it is in P for any class of graphs of bounded degree.
\end{theorem}

We also give a summary of our results in the form of two tables (Tables~\ref{tab:1} and \ref{tab:2}), some further necessary definitions appear in the preliminaries.
\begin{table}
\begin{center}
\begin{tabular}{|c|c|c|}
\hline
 ${\cal H}$ & {\sc Multigraph Matching/ $d$-Cut} & {\sc Partially Reflexive Stable Cut} \\
\hline  
 $S_{j,k,\ell}$ & P & P \\  
\hline
 $H_i^{j,k,\ell,m}$ & NP-complete & NP-complete \\    
\hline
 $C_i$ & NP-complete & NP-complete \\    
\hline
 $\{kN_{1,1,1},H_1^{1,1,1,1}\}$ & P & NP-complete \\
\hline
 $\{N_{1,1,1},H_1^{1,1,1,\ell}\}$ & P & NP-complete \\
\hline
 $\{C_3,H_1^{2,2,2,1}\}$ & P & NP-complete \\ 
\hline
 $\{C_3,H_1^{2,2,2,2}\}$ & NP-complete & NP-complete \\ 
\hline
\end{tabular}
\end{center}
\caption{Our principal results around {\sc Multigraph Matching/ $d$-Cut} and {\sc Partially Reflexive Stable Cut} for ${\cal H}$-subgraph-free instances (see Section~\ref{s-pre} for further definitions).}
\label{tab:1}
\end{table}

\begin{table}
\begin{center}
\begin{tabular}{|c|c|c|c|}
\hline
 $H$ & {\sc \mbox{Part.} \mbox{Ref.} Stable Cut} & {\sc Disconnected Cut} & $\SurHom(C'_k)$ ($k\geq 6$) \\
\hline  
 $S_{1,4}$ & NP-complete & P & P\\  
\hline
 $K_3$ & NP-complete & P & NP-complete \\    
\hline
\end{tabular}
\end{center}
\caption{Our principal results around {\sc Partially Reflexive Stable Cut}, {\sc Disconnected Cut} and $\SurHom(C'_k)$ for $H$-subgraph-free instances (see Section~\ref{s-pre} for further definitions)}
\label{tab:2}
\end{table}

\section{Preliminaries}\label{s-pre}

\emph{Looped} vertices are reflexive and \emph{loopless} vertices are irreflexive. Call a partially reflexive multigraph \emph{subcubic} if the maximum degree of the irreflexive graph underlying it is three, or equivalently $G$ is $K_{1,4}$-subgraph free. This is significant, as by taking the reduction of Patrignani and Pizzonia \cite{MatchingCut} we can show that {\sc Multigraph Matching Cut} is NP-hard on subcubic graphs (whereas their original result only claims this for maximum degree 4). In \cite{MatchingCut} double-edges increased the degree twice allowing the NP-hardness for maximum degree four graphs to translate to simple graphs.

Let $S_{j,k,\ell}$ indicate the {\it subdivided claw}, which is obtained by identifying the ends of paths of length $j,k,\ell$, respectively. Let $kH$ indicate the disjoint union of $k$ copies of~$H$.
Let $N_{i,j,k}$ be the graph formed by adding pendant paths of length $i$, $j$ and $k$ to the vertices of a triangle. In particular, $N_{1,1,1}$ is the {\it net}.

 Let ${H_{i}}$ be the graph obtained by adding a path of length $i$ between the internal vertices of two vertex-disjoint copies, $x_1-y_1-x_2$ and $x_3-y_2-x_4$, of $P_3$. We define ${H^{j_1, j_2, j_3, j_4}_{i}}$ to be the graph obtained from $H_i$ by subdividing the edge incident to $x_k$ $j_k-1$ times for $1 \leq k \leq 4$. In particular, the case $j_1=j_2=j_3=j_4=1$ corresponds to $H_i$ and we omit the superscript. If $j_1=j_2=j_3=j_4=k$, then we may write $H^k_i$. For example, $H^{2,2,2,2}_1=H^2_1$.

In a graph $G$, let $N(u)$ be the set of vertices adjacent to $u$ in $V(G)$. Call a pair of vertices $u$ and $v$ \emph{twins} if $N(u)=N(v)$. If further we have $E(u,v)$, then $u$ and $v$ are \emph{true twins}.

In this paper, $\mathcal{H}$ always refers to a finite set of graphs.



\section{Multigraph Matching Cut}\label{s-1}

The {\sc Matching Cut} problem asks if there exists an edge cut in 
a graph $G$ that is also a matching. Notice this problem is equivalent to asking if $G$ can be coloured with two colours such that each vertex is adjacent to at most one vertex coloured with a different colour and both colours are used at least once.
Recall that {\sc Multigraph Matching Cut} is the variant of {\sc Matching Cut} for multigraphs, where a multi-edge can never be in the edge cut.

\begin{proposition}
{\sc Multigraph Matching Cut} is polynomial time solvable on every graph class of bounded treewidth.
\end{proposition}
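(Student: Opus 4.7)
The plan is to establish this via Courcelle's theorem by exhibiting an $\mathrm{MSO}_2$ sentence that expresses the existence of a matching cut in a multigraph. In the $\mathrm{MSO}_2$ encoding, a multigraph is a two-sorted structure whose universe is $V \cup E$ equipped with an incidence relation, so parallel edges appear as distinct elements of $E$ sharing the same pair of endpoints. A matching cut of $G$ can then be characterised as a nonempty proper vertex subset $A \subsetneq V$ such that (i) for any two distinct edges $e_1 \neq e_2$ with identical endpoints, both endpoints lie on the same side of the partition (i.e.\ multi-edges do not cross), and (ii) for each vertex $v$ at most one incident edge crosses between $A$ and $V \setminus A$.

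Next, I would write $\varphi$ of the form $\exists A \subseteq V : \psi(A)$, where $\psi$ is the conjunction of the two quantifier-free conditions above (both expressible using only the incidence relation and set membership). Applying Courcelle's theorem then yields, for every fixed $w$, a linear-time algorithm for checking $\varphi$ on $n$-vertex multigraphs of treewidth at most $w$, and in particular a polynomial-time algorithm on any class of bounded treewidth.

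The only real obstacle is to justify that Courcelle's theorem applies to multigraphs and not only to simple graphs. This is immediate, because the standard proof is phrased for arbitrary relational structures of bounded treewidth, and the incidence encoding of a multigraph makes parallel edges first-class objects over which $\mathrm{MSO}_2$ can quantify; moreover the treewidth of a multigraph is by convention that of its underlying simple graph, and a tree decomposition of the latter is also a tree decomposition of the former.

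If one prefers to avoid Courcelle, essentially the same argument can be executed by direct dynamic programming on a nice tree decomposition: a state at a bag $B$ records, for each $v \in B$, a colour in $\{\mathrm{red},\mathrm{blue}\}$ and a bit saying whether $v$ has yet seen an opposite-colour neighbour in the processed subgraph, together with two global bits tracking whether each colour has been used. Introduce-, forget- and join-node transitions are routine; at an edge-introduce node we reject states that would place the endpoints of a multi-edge on opposite sides or produce a second opposite-colour neighbour at a vertex. The resulting algorithm runs in time $c^{w} \cdot n^{O(1)}$ for some constant $c$, which is polynomial for bounded $w$.
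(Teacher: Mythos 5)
Your proof is correct, but it takes a genuinely different route from the paper's. The paper does not encode the multigraph problem directly: it first observes that any edge of multiplicity at least two forces its endpoints onto the same side, so multiplicities can be truncated to at most $2$; it then replaces each remaining double edge by a triangle, producing a simple graph with polynomially many extra vertices that has a matching cut if and only if the original multigraph does, and finally invokes the $\mathrm{MSO}_2$ expressibility of \textsc{Matching Cut} on simple graphs. You instead express the multigraph problem itself in $\mathrm{MSO}_2$ over the two-sorted incidence structure, where parallel edges are distinct edge-elements, and apply Courcelle's theorem directly (or run the explicit dynamic programme). Both arguments are sound. Your direct encoding has the advantage of avoiding the gadget entirely -- in particular it sidesteps the (implicit, though easily verified) claim that the triangle replacement preserves bounded treewidth, and it adapts to \textsc{Multigraph $d$-Cut} without redesigning the gadget, whereas the paper needs cliques of size $m+1$ there. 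The paper's reduction, on the other hand, is a black-box use of the known simple-graph result and records a structural fact (multiplicities may be capped at~$2$) that it reuses elsewhere. One small redundancy in your formula: once you require that no vertex has two crossing incident edges, the condition that parallel edges do not cross is automatic, since a crossing multi-edge would contribute two crossing edges at each endpoint; this does no harm.
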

\begin{proof}
    For {\sc (Simple) Matching Cut}, the result is noted in \cite{B09}. Alternatively, 
    {\sc (Simple) Matching Cut} can be expressed by an $\textrm{MSO}_2$ formula, whereupon we may appeal to Courcelle's Theorem \cite{Courcelle}. For example, if $M$ is a new variable holding on some set of edges, 
    \[ \exists M \forall x,y \ M(x,y) \rightarrow E(x,y) \wedge \forall z M(x,z) \rightarrow z=y\] 
    expresses that $M$ is a matching, while
    \[
    \exists U \exists x, y \ U(x) \wedge \neg U(y) \wedge \forall x, y \ (U(x) \wedge \neg U(y)) \rightarrow (E(x,y) \rightarrow M(x,y))    
    \]
    (where $U$ is a new unary relation symbol) expresses that $E$ minus $M$ is disconnected. It follows that {\sc Simple Matching Cut} can be solved in polynomial time on graphs of bounded treewidth. To show the same for {\sc Multigraph Matching Cut} first notice in an instance of {\sc Matching Cut}, if the multiplicity of an edge is at least two, both endpoints must be on one side of the cut meaning  additional edges may be removed leaving only multiplicities $1$ and $2$. As observed in \cite{MatchingCut}, by replacing each multiedge by a triangle we obtain a simple graph which has a matching cut if and only if the original multigraph had a matching cut. This simple graph has at most a polynomial number of additional vertices and so {\sc Multigraph Matching Cut} is also polynomial time solvable on graphs of bounded treewidth. 
\end{proof}

\begin{proposition}[\cite{MatchingCut}]
{\sc Multigraph Matching Cut} is NP-complete on subcubic graphs.
\end{proposition}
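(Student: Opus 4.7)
The plan is to adapt the reduction of Patrignani and Pizzonia \cite{MatchingCut} which shows that {\sc Matching Cut} is NP-complete on simple graphs of maximum degree~$4$. As noted in our preliminaries, their construction proceeds via an intermediate multigraph that is already subcubic in the sense that every vertex has at most three distinct neighbours; the jump to degree $4$ occurs only because, in the final step, each multi-edge is replaced by a gadget (a triangle) in order to obtain a simple graph, and the doubled edge contributed twice to the degree. For the multigraph version of the problem we may simply skip this last step and work with the intermediate object directly.

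First I would record that {\sc Multigraph Matching Cut} is in NP: a partition of $V(G)$ into two non-empty sides is a polynomial-size certificate, and checking that no multi-edge crosses and that each vertex has at most one crossing neighbour is linear in the size of $G$. For NP-hardness I would reduce from whichever starting problem Patrignani and Pizzonia use (a variant of SAT), instantiating their variable, clause and connector gadgets verbatim, but leaving every multi-edge in place rather than expanding it. The correctness argument they give carries over word-for-word, because the sole purpose of each multi-edge in their construction is to forbid its endpoints from being separated by the cut, and this is precisely the semantics of a multi-edge in {\sc Multigraph Matching Cut}: no multi-edge can lie in a matching cut since that would put two edges into the cut at each endpoint.

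The one thing that has to be checked, and which constitutes the only real obstacle, is that once the triangle replacement is omitted the resulting multigraph is actually subcubic in our sense, i.e.\ that the underlying simple graph is $K_{1,4}$-subgraph-free. This reduces to verifying that in the original Patrignani--Pizzonia construction every vertex of simple degree~$4$ owes that extra unit of degree to an incident multi-edge. This is exactly the content of the remark in the preliminaries, and a direct inspection of their gadgets confirms it; each vertex of the intermediate multigraph is incident to at most three distinct neighbours (possibly one of them via a multi-edge). Hence the reduction outputs a subcubic multigraph and the NP-completeness statement follows.
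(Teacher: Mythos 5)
Your proposal is correct and matches the paper's intended argument: the result is attributed to Patrignani and Pizzonia, with the preliminaries noting that their construction is already a subcubic multigraph once multi-edges are counted only once towards degree, and that the degree-$4$ bound in their paper arises solely from the passage to a simple graph. Your observation that the multi-edge semantics (forcing both endpoints to the same side) is exactly what the gadgets require is the same point the paper relies on.
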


Given an instance of {\sc Not-All-Equal $3$-Satisfiability} {\sc NAE 3-SAT}, which is satisfied both when each variable is assigned to be true and when each variable is assigned to be false, we call the problem of deciding if there is a third satisfying assignment {\sc NAE 3-SAT 0-1}.
\begin{lemma}
{\sc NAE 3-SAT 0-1} is NP-complete, even for instances in which every clause contains exactly one negative literal.
\end{lemma}

\begin{proof}
We reduce from {\sc Matching Cut}. Given an instance $G$ of {\sc Matching Cut}, we create an instance $\phi$ of {\sc NAE 3-SAT 0-1} with one variable $x_i$ for each vertex $v_i$ of $G$. For each vertex $v_i$ and each pair of neighbours of $v_i$, $v_j$ and $v_k$, we add a clause $(\bar{x}_i,x_j,x_k)$. The formula $\phi$ is satisfied when every variable is true since every clause contains both positive and negative literals. The same applies when every variable is false. Additionally, every clause of $\phi$ contains exactly one negative literal.

We first assume that $G$ has a matching cut $(A,B)$. A further (third) satisfying assignment, $s$, for $\phi$ is obtained by setting variables in $A$ to true and $B$ to false. By definition, both $A$ and $B$ are non-empty. This implies that $s$ contains at least one true and at least one false variable. Assume that $s$ is not a satisfying assignment. Then there exists a clause in which all three literals take the same value. In other words, there is a clause $(\bar{x}_i,x_j,x_k)$ such that $x_i$ belongs to one side of the cut and both $x_j$ and $x_k$ belong to the other. This is a contradiction since $(A,B)$ is a matching cut and $v_i$ is adjacent to both $v_j$ and $v_k$.

Now assume that $\phi$ has a satisfying assignment $s$ with at least one true variable and at least one false variable. We obtain a matching cut $(A,B)$ of $G$ by letting $A$ be the set of vertices whose corresponding variables are set to true and $B$ be the set of vertices whose corresponding variables are false. Both $A$ and $B$ are non-empty since $s$ contains at least one true and at least one false variable. No vertex in $A$ has more than one neighbour in $B$ and vice versa since this leads to an unsatisfied clause in $\phi$. 
\end{proof}

\begin{proposition}\label{lem:mmc-k-sub}
 For every $k\geq 1$,   {\sc Multigraph Matching Cut} is NP-complete for $k$-subdivisions of subcubic graphs.
\end{proposition}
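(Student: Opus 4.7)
My plan is to reduce from {\sc Multigraph Matching Cut} on subcubic multigraphs, shown NP-complete by the preceding proposition. Given an instance $G$, I construct a subcubic multigraph $H=H(G,k)$ such that the $k$-subdivision $H^k$ has a matching cut if and only if $G$ does. The key observation is that a multi-edge can never lie in a matching cut, so under the natural subdivision for multigraphs (where each segment of a subdivided multi-edge remains a multi-edge of the same multiplicity), the endpoints of a subdivided multi-edge are forced onto the same side. This preserves the rigidity role that multi-edges play in $G$.

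To build $H$ I retain every multi-edge of $G$ and replace each single edge $uv$ of $G$ by a short gadget, for instance a path $u$-$a$-$v$ where $ua$ is a single edge and $av$ is a multi-edge, with $a$ a new degree-two vertex. After $k$-subdivision, the multi-edge segment $av$ becomes a rigid path (non-cuttable) that forces $a$ and $v$ to lie on the same side, while $ua$ becomes a path of $k+1$ single edges and provides the only cuttable region of the gadget. Thus the cut status of the original edge $uv$ corresponds exactly to whether the $ua$-path is cut. Since $a$ has degree $2$ and the replacement does not increase the degree of $u$ or $v$ in the underlying simple graph, $H$ is subcubic.

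The forward direction of the equivalence is by direct extension: from a matching cut of $G$, place each subdivision vertex on the side of its rigid anchor, and on each gadget corresponding to a cut edge put a single canonical cut at the position adjacent to $u$. For the reverse direction, given any matching cut of $H^k$, I would normalize it so that each cuttable path of a gadget contains at most one cut placed at its position $0$ (adjacent to $u$) or position $k$ (adjacent to $v$), and then restrict to $V(G)$ to obtain a matching cut of $G$.

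The main obstacle is the normalization step. An arbitrary matching cut of $H^k$ may place cuts at interior positions of a $k$-subdivided single edge path, and such interior cuts do not consume the matching quota at either endpoint of the gadget, while nonetheless flipping the endpoints onto different sides. This creates room for spurious matching cuts of $H^k$ that would violate the matching constraint once restricted to $G$. The delicate task is to design the gadget (and accompanying normalization lemma) so that every relevant cut is forced to consume a matching slot at one of the original vertices, uniformly in $k$, while keeping $H$ subcubic. I expect this to be the step that most needs care; in particular, variants of the gadget may be required at vertices where several incident edges are simultaneously in the cut, in order to allocate the matching slot consistently.
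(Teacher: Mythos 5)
Your reduction stalls exactly where you say it does, and the missing step is not a technicality that ``normalization'' can recover: with your gadget the equivalence is genuinely false. If every original single edge $uv$ becomes, after subdivision, a path containing $k+1$ consecutive single edges, then a matching cut of $H^k$ may sever that path at an interior edge, separating $u$ from $v$ without spending a matching slot at either $u$ or $v$ (or at $a$). A vertex $u$ of degree $3$ in $G$ whose three incident edges are all single can then be separated from all three of its neighbours simultaneously in $H^k$, so $H^k$ can admit a matching cut while $G$ admits none, and no local re-routing of the cut positions repairs this --- the information ``this edge was cut'' simply is not charged to any original vertex. Worse, even if you force the unique cuttable position to sit adjacent to one endpoint (say $u$), you only enforce the quota at $u$ and lose it at $v$, so a direct edge-by-edge replacement starting from an arbitrary subcubic {\sc Multigraph Matching Cut} instance cannot give an exact equivalence.

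The paper sidesteps both problems by not reducing from {\sc Multigraph Matching Cut} at all. It reduces from {\sc NAE 3-SAT 0-1} and builds the instance so that \emph{at most one vertex per clause gadget} ever has more than one single incident edge: variable paths and the edge to the clause vertex $v_{j_1}'$ are made entirely of double edges (hence rigid), and when the whole graph is subdivided $k$ times the authors freely choose the multiplicities of the \emph{new} edges --- every new edge is double except the single first edge of each of the two paths leaving $v_{j_1}'$. Thus each cuttable path has exactly one cuttable position, and that position is incident to $v_{j_1}'$, so the matching constraint at that single designated vertex is what encodes the clause; every other vertex has all incident edges double and its quota is vacuous. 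The two ingredients you are missing are therefore: (i) the freedom to assign multiplicities to the subdivision edges rather than inheriting them from the subdivided edge, and (ii) an overall design in which only one endpoint of each cuttable path needs its matching budget policed --- which is what makes a SAT-style reduction strictly easier here than a self-reduction from the unsubdivided problem.
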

\begin{proof}
We reduce from {\sc NAE 3-SAT 0-1} where every clause contains exactly one negative literal. Given an instance $\phi$ of this problem, we construct a subcubic instance $G$ of {\sc Multigraph Matching Cut} as follows. 
For each variable $x_i$, we take a path $P_i$ on $l$ vertices, where $l$ is the number of occurrences of $x_i$ in $\phi$. Make each edge on the path a double edge. For each clause $C_j= (\bar{x}_{j_1} \lor x_{j_2} \lor x_{j_3})$, we select a different vertex $v_{j_i}$ on each of the paths $P_{j_i}$ for $1 \leq i \leq 3.$ We then add a vertex $v_{j_1}'$ adjacent to $v_{j_1}$, by a double edge. Finally, we add single edges between $v_{j_1}'$ and each of $v_{j_2}$ and $v_{j_3}$. See Figure~\ref{fig:1} for an example.

If $G$ has a matching cut then every vertex on each variable path receives the same colour and some pair of variable paths receive different colours. We obtain a satisfying assignment in which some pair of variables are assigned different values by setting vertices in one colour to true and the other colour to false. Assume that this does not lead to a satisfying assignment, then some vertex has two neighbours in the other colour, a contradiction.

Next, assume that $\phi$ has a satisfying assignment where two variables receive different values. We obtain a matching cut of $G$ by colouring paths corresponding to true variables blue and paths corresponding to false variables red. By extending the colouring along the double edges, this results in a matching cut between the red and the blue, else some clause contains three literals with the same value.

We construct an instance $G'$, which is a $k$-subdivision of a subcubic graph similarly, except that we subdivide $k$ times each edge in the underlying simple graph of $G$, before adding the specified multi-edges. We make every new edge a double edge except for the first edges on the two paths from $v_{j_1}'$ to $v_{j_2}$ and $v_{j_3}$ for each clause $C_j$. 
\end{proof}

\begin{figure}[h]
\resizebox{12cm}{!}{
\begin{tikzpicture}[every state/.style={draw,circle},node distance=3em, inner sep=0.1cm]
\node[state] (v1) at (0, -10) {$x_2$};
\node[state] (v2) at (1.5,-10) {$x_2$};
\node[state] (v3) at (3,-10) {$x_2$};
\node[state] (v4) at (5,0) {$x_1$};
\node[state] (v5) at (6.5,0) {$x_1$};
\node[state] (v6) at (8,0) {$x_1$};
\node[state] (v7) at (10,-10) {$x_3$};
\node[state] (v8) at (11.5,-10) {$x_3$};
\node[state] (v9) at (13,-10) {$x_3$};
\node[state] (v10) at (6.5,-2) {$x_1$};
\node[state] (v11) at (6.5,-4) {$x_1$};
\node[state] (v12) at (1.5, -8) {$x_2$};
\node[state](v13) at (1.5,-6) {$x_2$};
\node[state] (v14) at (11.5, -8) {$x_3$};
\node[state] (v15) at (11.5, -6) {$x_3$};
\path[thick] (v1) edge (v2)
                (v1) edge [double] (v2)
                (v2) edge [double] (v3)
                (v4) edge [double] (v5)
                (v5) edge [double]  (v6)
                (v7) edge [double] (v8)
                (v8) edge [double] (v9)
                (v10) edge [double] (v5)
                (v11) edge [double] (v10)
                (v11) edge  (v15)
                (v15) edge [double] (v14)
                (v14) edge [double] (v8)
                (v11) edge (v13)
                (v13) edge [double] (v12)
                (v12) edge [double] (v2);
                 \end{tikzpicture}
                 }
                 \caption{The clause $C_1=(\bar{x_1} \lor x_2 \lor x_3$) in the case $k=1$. Note the horizontal paths belong to the variable gadgets. There are three edges connecting the clause gadget to its corresponding variable gadgets.}
                 \label{fig:1}
                 \end{figure}

\noindent
Note that the previous proposition does not demonstrate fully the property C3, though it demonstrates a weak version of it where one changes the type, single or double edge, after performing C3 on the underlying simple graph. However, it does enough to prove exactly the same as C3 over $\mathcal{H}$-subgraph-free graphs. Hence, we have proved Theorem~\ref{t-1} which we may paraphrase by saying that {\sc Multigraph Matching Cut} has the same classification as a C123-problem over $\mathcal{H}$-subgraph-free graphs.

\section{Multigraph $d$-Cut}\label{s-2}

Here we assume $d \geq 1$ is a fixed integer. As $d$-cut is a generalisation of {\sc Matching Cut} to preserve such a relationship between {\sc Multigraph Matching Cut} and {\sc Multigraph $d$-Cut} we say a multigraph $G$ admits a $d$-cut if and only if $G$ can be partitioned into two sets such that each vertex has at most $d$ edges across the cut.

\begin{proposition}
{\sc Multigraph $d$-Cut} is polynomial time solvable on every graph class of bounded treewidth.
\label{prop:multi-d-cut-bounded-treewidth}
\end{proposition}
\begin{proof}
    When $d$ is a fixed integer, {\sc $d$-Cut} can solved in polynomial time on graph classes of bounded treewidth \cite{GS21}. To show the same for {\sc Multigraph $d$-Cut}, for any multigraph $G$, we will construct a simple graph $G'$ such that $|V(G')| = |V(G)|(2d+1)$ and $G'$ has treewidth at most $(2d+1)(\mathit{tw}(G)+1)-1$, where $\mathit{tw}(G)$ was the treewidth of $G$. We first note, if $G$ contains some edge with multiplicity at least $d+1$, then both endpoints must be the same side of the cut, that is we may remove redundant edges such that every edge in $G$ has multiplicity at most $d+1$.
    
    For each vertex $v \in V(G)$, we add vertices $C^v = \{v_0, \ldots,v_{2d+1}\}$ to $G'$ making $G'[C^v]$ a clique. For every edge $(u,v) \in E(G)$ with multiplicity $r$, then $G'$ contains the edges $(u_0,v_i)$ and $(v_0,u_i)$ for all $0 \leq i \leq r$. We now claim $G'$ has treewidth at most $(2d+1)(\mathit{tw}(G)+1)-1$. Suppose $(T,X)$ is some tree decomposition of $G$, for every $B \in X$, let  $B' = \{ C^v | v \in B\}$ and $X' = \{ B' | B \in X'\}$. If there is an edge between a pair $u_i \in C^u$ and $v_j \in C^v$ then $u,v$ we adjacent in $G$, that is $(T,X')$ is a tree decomposition of $G'$. As $| B'| =  (2d+1)|B|$ for every $B \in X$, $(T,X')$ has width at most $(2d+1)(tw(G)+1)-1$.

    We now claim $G'$ has a $d$-cut if, and only if, $G$ has a multigraph $d$-cut. Suppose $G$ has a multigraph $d$-cut, that is there is some red, blue colouring such that every vertex is incident to at most $d$ bichromatic edges (where an edge is bichromatic iff its ends are coloured differently). Give $C^v$ the same colour as $v \in V(G)$. As every vertex in $C^v$ is incident to a bichromatic edge iff $v$ is incident to a bichromatic edge, every vertex in $C^v$ is adjacent to at most $d$ bichromatic edges. Suppose now $G'$ has some $d$-cut. Given a clique of size $2d+1$ must be asigned a single colour, we let $v$ take the same colour at $C^v$, for every $v \in V(G)$. If some vertex $v \in V(G)$ is incident to at least $d+1$ bichromatic edges then $v_0 \in C^v$ was also adjacent to at least $d+1$ bichromatic edges, a contradiction. 
\end{proof}

\begin{proposition}
    For any $d \geq 1$, {\sc Multigraph $d$-Cut} is NP-complete for $k$-subdivisions of subcubic graphs for any $k$.
\end{proposition}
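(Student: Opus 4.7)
The plan is to lift the reduction in Proposition~\ref{lem:mmc-k-sub} from $d=1$ to general $d\geq 1$ by replacing the role of ``double edge'' with ``$(d{+}1)$-multiplicity edge'' (which cannot be in a $d$-cut) and by replacing the role of ``single edge'' in the clause gadget with a ``$d$-multiplicity edge''. I would reduce from \textsc{NAE 3-SAT 0-1} as in Proposition~\ref{lem:mmc-k-sub}, and, as there, I may assume each clause contains at least one positive and at least one negative literal.

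For each variable $x_i$ I would introduce a path $P_i$ whose length equals the number of occurrences of $x_i$ in $\phi$, with every edge having multiplicity $d{+}1$, so that all vertices of $P_i$ lie on the same side of any $d$-cut. For each clause $C_j=(\bar{x_{j_1}}\lor x_{j_2}\lor x_{j_3})$, I would select distinct vertices $v_{j_i}$ on the corresponding paths $P_{j_i}$ and add a new vertex $v_{j_1}'$ joined to $v_{j_1}$ by $d{+}1$ parallel edges (forcing $v_{j_1}'$ on the same side as $v_{j_1}$) and to each of $v_{j_2}$ and $v_{j_3}$ by exactly $d$ parallel edges. The correctness check is then short: $v_{j_1}'$ contributes either $0$, $d$, or $2d$ edges across the cut, and the forbidden value $2d>d$ arises exactly when both $v_{j_2}$ and $v_{j_3}$ are on the opposite side from $v_{j_1}$, i.e.\ when $\bar{x_{j_1}}=x_{j_2}=x_{j_3}$ (all literals of $C_j$ equal). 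The other vertices $v_{j_2}$, $v_{j_3}$ each contribute at most $d$ edges across (their path-edges are $(d{+}1)$-multi and hence on one side), so they never exceed the budget. Thus a $d$-cut of the construction exists if and only if $\phi$ has an NAE-satisfying assignment that is neither all-true nor all-false, establishing NP-completeness on subcubic multigraphs (the underlying simple graph of every vertex has at most three distinct neighbours: two on a path plus one clause-neighbour, or the three endpoints $v_{j_1},v_{j_2},v_{j_3}$ for $v_{j_1}'$).

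To obtain the $k$-subdivision statement I would, analogously to the matching-cut proof, form $G'$ by $k$-subdividing every edge of the underlying simple graph and assigning multiplicities as follows: every new edge inherits multiplicity $d{+}1$, except along the two subdivided chains from $v_{j_1}'$ to $v_{j_2}$ and from $v_{j_1}'$ to $v_{j_3}$, where I would give multiplicity $d$ only to the first edge (incident to $v_{j_1}'$) and multiplicity $d{+}1$ to all subsequent edges. Since no $(d{+}1)$-multiplicity edge can be cut, the only potential cut edge on each such chain is the initial $d$-multi edge at $v_{j_1}'$, so any new subdivision vertex is forced onto the same side as the $v_{j_i}$ at the far end of its chain; consequently the NAE analysis above transfers verbatim to $G'$, which is by construction a $k$-subdivision of a subcubic graph.

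The main obstacle, as usual in this style of reduction, is the bookkeeping that confirms the clause-gadget forces precisely the NAE constraint under the looser $d$-cut budget: the calibration ``$(d{+}1)$ to pin sides together and $d$ to saturate the budget when crossed'' is what makes the ``all literals equal'' case produce $2d>d$ crossings at $v_{j_1}'$ while leaving every NAE assignment feasible at every vertex, and one must verify this simultaneously at $v_{j_1}'$, at $v_{j_2}$ and $v_{j_3}$, and at the new subdivision vertices along each chain.
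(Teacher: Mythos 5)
Your proposal is correct and is essentially the paper's proof: the paper simply takes the graph $G'$ from Proposition~\ref{lem:mmc-k-sub} and replaces every multiplicity-$1$ edge by multiplicity $d$ and every multiplicity-$2$ edge by multiplicity $d+1$, which is exactly your calibration (``$d+1$ to pin, $d$ to saturate''). The only difference is presentational — you re-derive the whole NAE 3-SAT 0-1 reduction from scratch, whereas the paper states the transformation in one line and observes that $G''$ has a $d$-cut iff $G'$ has a matching cut.
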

\begin{proof}
    Consider the graph $G'$ constructed in the proof of Proposition~\ref{lem:mmc-k-sub}. We construct a graph $G''$ such that $V(G'') = V(G')$ and edges of $G'$ with multiplicity 1 and 2 become edges with multiplicity $d$ and $d+1$, respectively in $G''$. Note $G''$ has a {\sc Multigraph $d$-Cut} if and only if $G'$ has a {\sc Multigraph Matching Cut}.
\end{proof}

\noindent
Hence, we have proven Theorem~\ref{t-2}, which we may paraphrase by saying that {\sc Multigraph $d$-Cut} has the same classification as a C123-problem over $\mathcal{H}$-subgraph-free graphs.


\section{Partially Reflexive Stable Cut}\label{s-3}

For a simple, irreflexive graph $G$ the line graph of $G$, $L(G)$, has a vertex for each edge in $E(G)$. For each pair of edges of $G$ with a common endpoint, there is an edge between their corresponding vertices in $L(G)$. The following relationship between matching cut and stable cut holds for simple graphs.
\begin{theorem}[\cite{BDLS00}]
    Let $G$ be a simple, irreflexive graph. If $L(G)$ contains a stable cut, then $G$ contains a matching cut; and if $G$ contains a matching cut and has minimum degree at least 2, then $L(G)$ contains a stable cut.
\end{theorem}

For some multigraph $G$, the line graph of $G$, $L(G)$, again contains a vertex for each edge in $E(G)$, with now an edge where a pair of edges have at least one common endpoint. Note $L(G)$ is simple and irreflexive.
\begin{lemma}\label{lem:geqLG}
    Let $G$ be a multigraph. If $L(G)$ contains a stable cut, then $G$ contains a multigraph matching cut. If $G$ contains a multigraph matching cut and each vertex is incident to at least 2 edges, then $L(G)$ contains a stable cut.
\end{lemma}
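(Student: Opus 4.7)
My plan is to mirror the BDLS00 argument for simple graphs, with careful handling of parallel edges as the only genuinely new ingredient.

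For the forward direction, I would start from a stable cut of $L(G)$, presented as a partition $V(L(G)) = E(G) = \alpha \cup \sigma \cup \beta$ with $\alpha,\beta$ non-empty, $\sigma$ independent in $L(G)$, and no $L(G)$-edges between $\alpha$ and $\beta$. Translating $L(G)$-adjacency back to ``sharing an endpoint in $G$'', this says that $\sigma$ is a matching of $G$ and that the vertex supports $V_\alpha := \bigcup_{e\in\alpha} V(e)$ and $V_\beta := \bigcup_{e\in\beta} V(e)$ are disjoint. I would then partition $V(G) = A \cup B$ by placing $V_\alpha \subseteq A$ and $V_\beta \subseteq B$, and distributing each remaining vertex (incident only to $\sigma$-edges, or isolated) to either side. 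Both sides are automatically non-empty, and any edge with endpoints on opposite sides necessarily lies in $\sigma$, so the cut is a matching. To exclude multi-edges from the cut, I would observe that if $e = uv \in \sigma$ has a parallel copy $e'$, then $e,e'$ are $L(G)$-adjacent, so $e' \notin \sigma$; hence $e' \in \alpha \cup \beta$, which forces both $u$ and $v$ into the same side and removes $e$ from the cut.

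For the converse, I would start from a multigraph matching cut $(A,B)$ of $G$ in which every vertex is incident to at least two edges (counted with multiplicity), and take $\alpha = E(G[A])$, $\sigma = E(A,B)$, $\beta = E(G[B])$. Because $\sigma$ is a matching containing no multi-edge, its elements pairwise miss each other's endpoints, so $\sigma$ is independent in $L(G)$. Every $\alpha$-edge lies inside $A$ and every $\beta$-edge inside $B$, so they share no endpoints and no $L(G)$-edge joins $\alpha$ to $\beta$. Non-emptiness of $\alpha$ and $\beta$ is precisely where the incidence-two hypothesis is used: any $u \in A$ has at least two incident edges and at most one cut edge (the cut has no multi-edge), leaving at least one edge inside $G[A]$ incident to $u$; symmetrically for $B$.

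The main obstacle, such as it is, is multi-edge bookkeeping: remembering that any two parallel edges of $G$ are $L(G)$-adjacent, that matching cuts in the multigraph sense forbid multi-edges entirely, and that ``at least two incident edges'' is counted with multiplicity so it is not vacuous for a vertex whose only edges are a single pair of parallels. Once those points are nailed down, the argument follows the BDLS00 template essentially line by line.
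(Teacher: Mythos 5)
Your argument is correct and is exactly the intended adaptation of the Brandst\"adt--Dragan--Le--Szymczak proof that the paper invokes (the lemma is stated there without an explicit proof, immediately after quoting the simple-graph version from \cite{BDLS00}). The two points you single out --- that parallel copies of a $\sigma$-edge land in $\alpha\cup\beta$ and force both endpoints to one side, and that the degree hypothesis (with multiplicity) combined with the no-multi-edge-in-the-cut condition guarantees $\alpha,\beta\neq\emptyset$ --- are precisely the multigraph-specific details that need checking, and you handle them correctly.
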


Let $G_{s}$ be the underlying simple graph of some multigraph $G$. Let $L^*(G)$ be obtained from $L(G_{s})$ by adding a self-loop to every vertex in $L(G_{s})$ corresponding to some edge with multiplicity at least $2$ in $G$. For a partially reflexive graph $G$, if $G$ contains a pair of true twins, $u,v$ with degree $\geq 3$, neither $u$ or $v$ can be contained in a minimal (partially reflexive) stable cut. This gives the following relationship between {\sc Multigraph Matching Cut} and {\sc Partially Reflexive Stable Cut}.

\begin{corollary}
    Let $G$ be a multigraph. If $L^*(G)$ contains a (partially reflexive) stable cut, then $G$ contains a multigraph matching cut. If $G$ contains a multigraph matching cut and each vertex is incident to at least $2$ edges (counting double edges twice)\footnote{This is different from how we count double edges in degree.}, then $L^*(G)$ contains a partially reflexive stable cut.
    \label{cor:main-reduction-matching-stable}
\end{corollary}
We now leverage the previous result to prove our main theorem.
\begin{theorem}
 {\sc Partially Reflexive Stable Cut} is:
\begin{itemize}
    \item  NP-complete for $\{C_4, C_5, \ldots, C_l, H_1, H_2 \dots H_k, K_{1,4}\}$-subgraph-free graphs for any $k \geq 1, l \geq 4$.
    \item NP-complete for $\{C_3, C_4, \ldots, C_l, H_2 \dots H_k, K_{1,4}\}$-subgraph-free graphs for any $k \geq 2, l \geq 3$.
    \end{itemize}
    \label{thm:siani-main}
\end{theorem}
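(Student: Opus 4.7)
The plan is to reduce from subcubic {\sc Partially Reflexive Stable Cut}, which is NP-complete (as mentioned in the introduction). The key tool is a weakened subdivision property for this problem: replacing an edge $uv$, where at least one of $u,v$ is looped, by a path $u,w_1,\ldots,w_m,v$ whose internal vertices $w_i$ are all looped, preserves the set of stable cuts. Indeed, a loop cannot appear in the cutset, so each $w_i$ must join $u$'s side, and $v$'s side-assignment is unchanged. This is the weakened version of C3 alluded to in the introduction.

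First I would take a subcubic input $G$ and design an edge-replacement operation that, for a parameter $N=N(k,l)$ chosen sufficiently large, stretches $G$ into a graph $G'$ whose degree-$3$ vertices are pairwise far apart and whose cycles all have length greater than $l$. Edges incident to a looped vertex are handled directly by the weakened subdivision. For an edge between two non-looped vertices, I would insert a small gadget at one endpoint which locally converts it into a looped endpoint, after which the remaining portion of the edge can be subdivided freely by looped vertices. In the first case (triangles allowed), this gadget can be triangle-based; in the second case (triangles forbidden, $H_1$ allowed), it must be triangle-free and built from an $H_1$-shaped piece. In either case I would verify, by an explicit case analysis on the possible cut positions, that stable cuts of $G$ and $G'$ correspond bijectively, so the reduction is correct.

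The last step is to verify that $G'$ is free of the listed forbidden subgraphs. $K_{1,4}$-freeness is preserved since every new vertex has degree at most $2$. Every cycle in $G'$ arises from a cycle of $G$ in which each edge has been replaced by a path of length at least $N$, so all cycles have length $>l$. For $H_i$-freeness I would argue structurally: any copy of $H_i$ must contain at least two branching vertices of $G'$ (those of degree $3$), and by construction these are pairwise at distance at least $N$ in $G'$; choosing $N$ larger than the diameter of $H_k$ then rules out the embedding of any $H_i$ with $i\leq k$. The main obstacle is the design of the edge-replacement gadget in the second case, where the absence of triangles forces an $H_1$-based construction whose local structure must be checked to avoid $H_i$ for $i\geq 2$. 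Once this gadget is fixed and its subdivision length is tuned to $k$ and $l$, the remaining verification reduces to a routine check on the limited list of short subgraphs that can appear.
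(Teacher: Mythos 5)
Your strategy differs fundamentally from the paper's, and it has a gap at its central step. The paper does not reduce from subcubic {\sc Partially Reflexive Stable Cut} by local edge replacement; it reduces globally from {\sc NAE 3-SAT 0-1}, building variable gadgets that are entirely reflexive trees (so every edge incident to them can be stretched at will by inserting looped vertices) and confining all irreflexive vertices to constant-size clause gadgets: a triangle with exactly one looped vertex for the first bullet, and a long cycle with exactly two adjacent irreflexive vertices for the triangle-free second bullet. Both the correctness argument and the forbidden-subgraph analysis exploit this specific global shape, in which every edge that needs stretching already has a looped endpoint.

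The gap in your plan is the replacement gadget for an edge $uv$ with \emph{both} endpoints irreflexive, which you leave as a black box; this is exactly where the difficulty lies, and it is doubtful such a gadget exists. Any replacement whose internal vertices are all looped implements only the constraint ``if neither $u$ nor $v$ is in the cutset then they lie on the same side''; it cannot forbid the state in which $u$ and $v$ both lie in the cutset, which the original edge does forbid since the cutset is independent. This is a genuine soundness failure, not a technicality: the irreflexive $K_4$ has no stable cut, but after subdividing each edge once by a looped vertex it does (put two original vertices in the cutset; the looped vertex between them becomes its own component). Hence the gadget must contain irreflexive internal vertices, and then you must show it cannot itself be cut, that it excludes the bad boundary state, that it keeps all degrees at most $3$, and that it creates no $H_1$ (first bullet), no $H_i$ with $i\geq 2$, and no short cycles --- none of which is addressed. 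The paper's own tractability results (e.g., Theorem~\ref{thm:tala-H2221} and the final statement of Theorem~\ref{thm:stable-main}) show that {\sc Partially Reflexive Stable Cut} is not a C123-problem, so a generic subdivision/local-replacement argument provably cannot go through in general and cannot be dismissed as routine. Your weakened-subdivision lemma for edges with a looped endpoint is correct and is indeed the tool the paper uses, but only after the construction has been engineered so that it is the only kind of stretching ever required.
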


\begin{proof}
We use the NP-hardness proof born of Lemma~\ref{lem:mmc-k-sub} having been put through the $L^*()$ construction of the previous corollary (reducing $\phi$ to $G$ and thence to $L^*(G)$). Let us assume in the variable gadgets (paths) of $G$ that we never have adjacent vertices used as link points for the clause gadgets (distance $2$ would be fine). We need this to ensure that $L^*(G)$ is subcubic (see Figure~\ref{fig:stable-cut-prequel}). We note that every vertex in $G$ is incident on at least two edges (counting double edges twice). The ensuing clause gadget (in $L^*(G)$) is drawn in Figure~\ref{fig:stable-cut}. The variable gadgets (in $L^*(G)$) are reflexive paths of triangles. Note that these reflexive triangles may be contracted to a single loop without affecting whether the instance has a partially reflexive stable cut.

For the first part of the theorem, it remains to argue that the reduced instances $L^*(G)$ can be made $\{C_4, C_5 \ldots C_l,  H_1,$ $H_2, \dots H_k, K_{1,4}\}$-subgraph-free. We do this by subdividing $q=\max\{k,l\}$ times the edges between looped vertices in the variable gadgets only. This produces a new instance $G'$. Assume $G'$ contains $H_i$ as a subgraph for some $1 \leq i \leq k$. The only vertices in $G'$ with degree at least $3$ are sitting in the variable gadgets and the three vertices of each clause triangle $T_{j}$. Since each edge of the variable gadgets was subdivided at least $k$ times, the distance between a degree $3$ vertex in a variable gadget and any other degree $3$ vertex is least $k+1$. Any two vertices belonging to the same clause triangle have degree exactly three and share a neighbour. Therefore, we may assume that the degree $3$ vertices of $H_i$ belong to distinct triangles. This leads to a contradiction since any path between two such vertices goes through some edges in the variable gadget and hence has length at least $k+1$ (since the subdivision has taken place). Similarly, any cycle of length at least $4$ in $G'$ contains some vertex from a variable gadget. Therefore any such cycle has length at least $2(l+1)$. For $K_{1,4}$-subgraph=freeness, note that $G'$ has maximum degree $3$.

Finally, we adapt the construction of $G'$ to prove the second part of the theorem. Again, let $q=\max\{k,l\}$. We subdivide the edges of the variable gadgets in $G$ $q$ times and replace the clause triangles $T_j$ by cycles of length $2q+3$ which are reflexive except for exactly two adjacent vertices. We add edges between the two link points of the variable gadgets of the positive literals and the two loopless vertices in the $C_{2q+3}$, respectively. In addition, we add an edge between the link point of the negated literal and some reflexive vertex which is at distance at least $q+1$ from either irreflexive vertex on the cycle. By the same arguments as above, $G'$ has a stable cut if and only if $\phi$ has a third satisfying assignment.
Assume that $G'$ contains a cycle $C$ of length at most $l$. If $C$ involves any vertex from a variable gadget, then the length of $C$ is at least $2(q+1)$. Therefore, $C$ does not contain a vertex of any variable gadget. This leads to a contradiction since each clause gadget is a cycle of length $2q+3$ and there are no edges between any two distinct clause gadgets. Now assume that $G'$ contains a subgraph $H_i$ with $2 \leq i \leq k$. This $H_i$ subgraph does not contain any vertex from a variable gadget since $i \leq q$. Therefore both degree $3$ vertices of this subgraph belong to the same clause gadget. This leads to a contradiction since paths between two such vertices either have length $1$ or length at least $q+1$. 
\end{proof}
\begin{figure}[h]
\begin{center}
$
   \xymatrix{
& \bullet \ar@{=}[d] & & \bullet \ar@{=}[d] &\\
\bullet \ar@{=}[r] & \bullet \ar@{=}[r] & \bullet \ar@{=}[r] & \bullet \ar@{=}[r] & \bullet \\
}
$
$
\resizebox{8cm}{!}{
   \xymatrix{
& \bigcirc \ar@{-}[dl] \ar@{-}[dr] & & & & \bigcirc \ar@{-}[dl] \ar@{-}[dr] &\\
\bigcirc \ar@{-}[rr] & & \bigcirc \ar@{-}[rr] & & \bigcirc \ar@{-}[rr] &  & \bigcirc \\
}
}
$
\end{center}
\caption{The conversion of the variable gadgets in the reduction from {\sc Multigraph Matching Cut} to {\sc Partially Reflexive Stable Cut} (left to right; looped vertices are indicated by open circles).}
\label{fig:stable-cut-prequel}
\end{figure}
\begin{figure}[h]
\begin{center}
\resizebox{12cm}{!}{
\begin{tikzpicture}[every state/.style={draw,circle},node distance=3em, inner sep=0.1cm]
\node[state] (v1) {$x_1$};
\node[state] (v2) at (-2,-2) {$x_1$};
\node[state] (v3) at (0,-2) {$x_1$};
\node[state] (v5) at (5.5,0) {$x_2$};
\node[state] (v6) at (3.5,-2) {$x_2$};
\node[state] (v7) at (5.5,-2) {$x_2$};
\node[state] (v9) at (11,0) {$x_3$};
\node[state] (v10) at (9,-2) {$x_3$};
\node[state] (v11) at (11,-2) {$x_3$};
\node[state](v13) at (0,-4) {$c_1$};
\node[state] (v14) at (5.5, -4) {$c_1$};
\node[state] (v15) at (11, -4) {$c_1$};
\path[thick] (v1) edge (v2)
                (v2) edge (v3)
                (v6) edge (v7)
                (v10) edge (v11)
                (v3) edge (v13)
                (v7) edge (v14)
                (v11) edge (v15)
                (v13) edge [in=235, out=315] (v15)
                (v9) edge (v10)
                (v13) edge (v14)
                (v14) edge (v15)
                (v9) edge (v11)
                (v13) edge [loop] (v13)
                (v9) edge [ loop ] (v9)
                (v10) edge [loop ] (v10)
                (v11) edge [loop ] (v11)
                (v5) edge (v6)
                (v5) edge (v7)
                 (v1) edge (v3)
                 (v5) edge [loop] (v5)
                 (v6) edge [loop] (v6)
                 (v7) edge [loop] (v7)
                 (v2) edge [loop] (v2)
                 (v1) edge [loop] (v1)
                 (v3) edge [loop] (v3);
\end{tikzpicture}
}
\end{center}
\caption{The clause $C_1=(\bar{x}_1 \lor x_2 \lor x_3)$}
\label{fig:stable-cut}
\end{figure}
Let $H^2_1$ be the graph on $8$ vertices built from $H_1$ by subdividing all four of its pendant edges once. Call $H'$ a \emph{pendant subdivision} of $H$ if the only edges that are subdivided (perhaps many times) contain a vertex of degree $1$. Proof of the following is deferred to the appendix.

\begin{corollary}
{\sc Partially Reflexive Stable Cut} has the same classification as a C123-problem on $\mathcal{H}$-subgraph-free graphs, unless $\mathcal{H}$ contains both a pendant subdivision of a net and a pendant subdivision of $H_1$. Additionally, {\sc Partially Reflexive Stable Cut} is NP-complete on $\{C_3,H_1^2\}$-subgraph-free graphs.
\label{cor:main-stable-bis}
\end{corollary}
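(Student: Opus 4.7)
The plan is to derive Corollary~\ref{cor:main-stable-bis} directly from Theorem~\ref{thm:siani-main}, combined with the standard bounded-treewidth argument already used in this paper and a structural case analysis on $\mathcal{H}$. For the easy direction of the main classification, whenever $\mathcal{H}$ contains a graph from $\mathcal{S}$ the class of $\mathcal{H}$-subgraph-free graphs has bounded treewidth by the structural lemma underlying C1 in the C123-framework, and {\sc Partially Reflexive Stable Cut} is $\textrm{MSO}_2$-definable (with loops encoded as a unary predicate on vertices), so Courcelle's theorem gives polynomial-time solvability.

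For the hard direction, assume $\mathcal{H}$ contains no member of $\mathcal{S}$ and does not simultaneously contain both a pendant subdivision of a net and a pendant subdivision of $H_1$. I would split into two symmetric cases. If no $H \in \mathcal{H}$ is a pendant subdivision of $H_1$, then for $l$ and $k$ chosen large enough in terms of $\max_{H\in\mathcal{H}}|V(H)|$, the key structural claim is that every $H \in \mathcal{H}$ must contain some member of $\{C_4,\ldots,C_l,H_1,H_2,\ldots,H_k,K_{1,4}\}$ as a subgraph. Indeed, since $H\notin\mathcal{S}$, $H$ either contains a cycle (caught by the $C_n$'s), has a vertex of degree $\geq 4$ (caught by $K_{1,4}$), or is a subcubic tree with at least two vertices of degree $3$ in the same component; in the last subcase, the only such trees that fail to contain any $H_i$ as a subgraph are exactly the pendant subdivisions of $H_1$, which are excluded by hypothesis. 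Hence the hard class from the first bullet of Theorem~\ref{thm:siani-main} sits inside the $\mathcal{H}$-subgraph-free class and NP-completeness transfers. The symmetric case, where no $H \in \mathcal{H}$ is a pendant subdivision of a net, invokes the second bullet of Theorem~\ref{thm:siani-main} with the analogous claim that the only ``missed'' shapes are pendant subdivisions of the net.

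For the final assertion, NP-completeness on $\{C_3, H_1^2\}$-subgraph-free graphs, I would instantiate the construction of the second bullet of Theorem~\ref{thm:siani-main} with $l=3$ and $k=2$, then take the subdivision parameter $q$ large enough (in terms of the diameter of $H_1^2$) to rule out any embedding of $H_1^2$. Since all degree-$3$ vertices in the constructed graph sit at roots of variable trees or on clause cycles of length $2q+3$ with just two adjacent irreflexive vertices, a direct distance argument between the degree-$3$ vertices of any would-be copy of $H_1^2$ yields the required contradiction. The main obstacle is the combinatorial characterisation invoked in the hard direction: that a subcubic tree with multiple degree-$3$ vertices which avoids every $H_i$ for $i\geq 2$ must be a pendant subdivision of $H_1$ (and the analogue for nets when cycles are allowed). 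This is the structural heart of why the exception to the C123-like classification arises \emph{only} when pendant subdivisions of both a net and of $H_1$ simultaneously lie in $\mathcal{H}$.
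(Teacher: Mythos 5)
There are two genuine gaps. First, your case split pairs each case with the wrong construction, and this makes your key structural claim false. The first bullet of Theorem~\ref{thm:siani-main} is proved via a construction that is $H_1$-subgraph-free but \emph{contains} triangles, nets and all pendant subdivisions of nets (each clause triangle carries three long pendant paths); the second bullet's construction is triangle-free (hence omits every pendant subdivision of a net, since each such graph contains the net) but \emph{contains} $H_1$ and all of its pendant subdivisions (the two adjacent irreflexive vertices of each clause cycle each have a pendant path along the cycle and another into a variable tree). So the triangle-free (second) construction is the one to use when no member of $\mathcal{H}$ is a pendant subdivision of $H_1$, and the $H_1$-free (first) construction when $\mathcal{H}$ contains such a graph but no pendant subdivision of a net --- the opposite of your assignment. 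With your pairing, the claim that every $H\in\mathcal{H}\setminus\mathcal{S}$ contains a member of $\{C_4,\dots,C_l,H_1,\dots,H_k,K_{1,4}\}$ already fails for $H=C_3$ or $H=N_{1,1,1}$ (a triangle is not caught by $C_4,\dots,C_l$, and both graphs embed in the first construction); moreover every subcubic tree with two degree-$3$ vertices contains \emph{some} $H_i$ --- in particular a pendant subdivision of $H_1$ contains $H_1$ itself --- so your characterisation ``the only trees missing every $H_i$ are the pendant subdivisions of $H_1$'' is the correct statement only for the list $H_2,\dots,H_k$ of the \emph{second} bullet.

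Second, the ``Furthermore'' part cannot be obtained from the second bullet by taking $q$ large. That construction contains $H_1^2$ for every $q\geq 2$: take the two adjacent irreflexive vertices $u,v$ of a clause cycle; each has a length-$2$ path along the reflexive part of the cycle and a length-$2$ path into a variable tree, and these four paths are pairwise disjoint once the cycle has length at least $7$. Increasing $q$ only creates more such copies, and no distance argument applies because the obstruction is a pair of \emph{adjacent} degree-$3$ vertices, not two far-apart ones. The paper instead uses a third, hybrid gadget: it takes the first (triangle-based) construction and subdivides each clause triangle exactly once into a $C_4$, which destroys all triangles while the $C_4$ is too short to supply two disjoint length-$2$ pendant paths at each of its two adjacent degree-$3$ vertices, so $H_1^2$ is avoided as well. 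This hybrid is also what rescues the main classification for sets such as $\{C_3,H_1^2\}$, which contain a pendant subdivision of $H_1$ together with a triangle-containing graph and therefore defeat both of the original constructions; your two-case scheme leaves exactly these sets uncovered. (Your treatment of the easy direction via bounded treewidth and $\mathrm{MSO}_2$ is fine.)
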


It follows from the previous corollary that the sets $\cal{H}$ in which the classification for {\sc Partially Reflexive Stable Cut} differs from a C123-problem must contain at least two graphs: one a pendant subdivision of a net and one pendant subdivision of $H_1$. We already saw that the $\{C_3,H_1^2\}$-subgraph-free case is NP-complete. Our further examination in this vicinity is deferred to the appendix (though the results also appear in Table~\ref{tab:1}).

\section{Surjective Homomorphism to a Cycle with a Single Loopless Vertex}\label{s-new}

To prove our next result, we rely on the problem {\sc Conflict-Free Cut}, a generalisation of {\sc Matching Cut} studied in \cite{RAUCH2025106503}. Here, an instance consists of a graph $G$ together with a graph $\hat{G}$, called a {\it conflict} graph, such that $V(\hat{G})=E(G)$. Note that two adjacent edges of $\hat{G}$ need not share any endpoints in $G$. A {\it conflict-free cut} of $G$ with respect to $\hat{G}$ is a set $F$ of edges of $G$ such that $F$ is an edge-cut of $G$ that forms an independent set of vertices in $\hat{G}$.
\begin{theorem}
\label{thm:C'r-triangle-free}
\SurHom$(C'_r)$ is NP-complete restricted to $K_3$-subgraph-free graphs for $r \geq 6$.
\end{theorem}
\begin{proof}
Let $r\geq 6$. We reduce from {\sc Conflict-Free Cut} which is NP-complete even when the conflict graph $\hat{G}$ is $1$-regular~\cite{RAUCH2025106503}, that is, even when $\hat{G}$ is an induced perfect matching. 

Given an instance $(G, \hat{G})$ of {\sc Conflict-Free Cut}, we construct an instance $G'$ of \SurHom($C'_r)$ where $C'_r$ has vertex set $\{0,1,2, \dots r-1 \}$ and $0$ is the only irreflexive vertex. We begin with an irreflexive copy of $\hat{G}$ with vertex set $M=E(G)$. Next we take a reflexive independent set $N_v= \{v'_1, \dots v'_{deg(v)}\}$, and a further reflexive vertex $v''$ which is complete to $N_v$, for each vertex $v$ of $G$. We then add an irreflexive vertex $d$ which is complete to $N=\bigcup_{v \in V(G)}N_v$. We add a reflexive pendant path on  $s=\lfloor \frac{r}{2} \rfloor-2$  vertices, $v''_{i_1} \dots v''_{i_s}$ to $v''$ for each vertex $v$ of $G$. For each edge, $e=uv$ of $G$, we choose a unique vertex $u'_i$ of $N_u$ and add the edge $eu'_i$.  We also choose a unique vertex $v'_j$ of $N_v$ and add the edge~$ev'_j$.
We refer to Figure~\ref{fig:surhom} for an illustration where $r=7$. In the same figure, we also illustrate the case $r=6$, which required us to make a slight modification of the general case, as we explain below.

We claim that $(G,\hat{G})$ has a conflict-free cut if and only if $G'$ permits a surjective homomorphism to $C'_r$.

First assume $(G, \hat{G})$ has a conflict-free cut $F$, which separates the vertices of $G$ into two sides A and B. We obtain a surjective homomorphism $h$ from $G'$ to $C'_r$ as follows. Let $h(d)=0$.
For every $i$ and $v\in A$, let $h(v'_i)=1$.
For every $i$ and $v\in B$, let $h(v'_i)=r-1$.

We now consider the (isolated) edges $ef$ in $\hat{G}$, so 
$e$ and $f$ are edges in $G$. 
As $F$ is a conflict-free cut of $(G,\hat{G})$, it holds that at most one of $e,f$ belongs to $F$.
Say $e\in F$, so $f\notin F$. We can safely set $h(e)=0$ and $h(f)=1$ or $h(f)=r-1$, depending on whether the two end-points of $f$ belong to $A$ or $B$, respectively.
Now suppose neither $e$ nor $f$ belongs to $F$.
If both $e$ and $f$ have their two endpoints both in $A$, or both in $B$, then we let
$h(e)=h(f)=1$, or $h(e)=h(f)=r-1$, respectively.
If one of them, say $e$, has its two endpoints in $A$, while $f$ has its two endpoints in $B$, then we can safely set $h(e)=1$ and $h(f)=0$.
Finally, beginning with $v''$, we map the pendant paths either to $2-3- \dots - \lfloor {\frac{r}{2}} \rfloor$ (if $h(v'_i)=1$) or $(r-2)-(r-3)- \dots -(r- \lfloor{\frac{r}{2} \rfloor})$ (if $h(v'_i)=r-1)$. This ensures the surjectivity of $h$. 

Now assume that $G'$ has a surjective homomorphism $h$ to $C'_r$. We first claim that $h(d)=0$. Assume otherwise. Since every irreflexive vertex shares a neighbour with $d$,  we may assume without loss of generality that $h(d) \in \{1,2 \}.$ For $r \geq 7$, this contradicts the surjectivity of $h$ since $d$ is joined to every other vertex of $G'$ by a path of length at most $\lfloor \frac{r}{2} \rfloor$ on which every internal vertex is reflexive.

As mentioned, for $r=6$, we slightly modify the construction by identifying the degree $1$ endpoints of each pendant path in $G'$ (which is a vertex), to a single vertex $x$. Note that we still obtain a surjective homomorphism from a conflict-free cut of $(G, \hat{G})$ as described above, since each of these identified vertices is mapped to $3$.  Now, we argue that $G'$ has diameter at most $4$. Any two vertices at a distance of at most $2$ from $d$ are at a distance of at most $4$. The only vertex $x$ with $dist(d,x)>2$ is the vertex obtained by identifying the endpoints of the reflexive paths. In this case $dist(x,v')=2$ for every $v' \in N(d)$. Hence $dist(x,y) \leq3$ for every vertex $y$ of $G'$. Now we may assume that $G'$ has no surjective homomorphism to $P'_6$, a path of length $6$ where every vertex except for one endpoint is irreflexive. In particular, this implies that, in any surjective homomorphism from $G'$ to $C_6'$, there exists a vertex  $v \in h^{-1}(0)$ which has neighbours in both $h^{-1}(1)$ and $h^{-1}(5)$. If $h(d) \in \{1,2\}$ then, since every neighbour of $d$ is reflexive, there is no vertex $y \in N^2(d)$ with $h(y)=5$. This contradicts the fact that $h$ is a surjective homomorphism, as the neighbourhood of any vertex $z \in h^{-1}(0)$ is contained in~$N^2(d)$. 
We conclude that $h(d)=0$ both if $r=6$ and $r\geq 7$.

Therefore, since $h(d)=0$ for any surjective homomorphism $h$ from $G'$ to $C'_r$, $h(v'_i) \in \{1, r-1 \}$ for each vertex $v'_i$ of $N$. We obtain a conflict-free cut of $G$ consisting of all the edges $xy$ such that $h(x'_1)\neq h(y'_1)$. By surjectivity of $h$ and the fact that each vertex $u$ of $G'$ is joined to $d$ by a path of length at most $\lfloor {\frac{r}{2}} \rfloor$ with only reflexive internal vertices, at least one vertex of $N$ is mapped to each of $1$ and $r-1$. Therefore both $A= \{v: h(v'_1)=1\}$ and $B= \{v: h(v'_1)=r-1 \}$ are non-empty. Any two vertices  $v'_i$,  $v'_j \in N(d)$ corresponding to the same vertex $v$ of $G$ are mapped to the same vertex since they share a reflexive neighbour and a neighbour $d \in h^{-1}(0)$. Therefore, if an edge has endpoints in both $A$ and $B$, its corresponding vertex in $M$ has neighbours mapped to both $r-1$ and $1$. Hence, this vertex must map to $0$. Then the set of edges with one endpoint in $A$ and the other in $B$ is contained in $h^{-1}(0)$. In particular, it induces an independent set and thus is a conflict-free cut of~$(G,\hat{G})$. 

We claim that $G'$ is $K_3$-subgraph-free. Since $N$ is an independent set, neither $d$ nor any reflexive neighbour of $v'_{i}$ is contained in a triangle. Similarly, no pendant path vertex is contained in any triangle. Then any triangle contains two adjacent vertices of $M$.  These two vertices do not share a neighbour in $M$ since $\hat{G}$ is $1$-regular. They do not share a neighbour in $N$ since each vertex of $N$ has at most one neighbour in $M$. Therefore $G'$ is triangle-free.
\end{proof}   
\begin{figure}[h]
\tikzset{every loop/.style={min distance=10mm,in=0,out=90,looseness=5}}
\begin{center} 
\resizebox{12cm}{!}{
\begin{tikzpicture}[every state/.style={draw,circle},node distance=3em, inner sep=0.1cm]
\node[state] (v1) {$d$};
\node[state] (v2) at (-4,-3) {$u'_1$};
\node[state] (v3) at (-2.5,-3) {$u'_2$};
\node[state] (v4) at (-1,-3) {$v'_1$};
\node[state] (v5) at (0.5,-3) {$v'_2$};
\node[state] (v10) at (2,-3) {$w'_1$};
\node[state] (v11) at (3.5, -3) {$w'_2$};
\node[state] (v12) at (2.75,2) {$w''$};
\node[state] (v13) at (2.75, -4) {$vw$};
\node[state] (v6) at (-3.25,1) {$u''$};
\node[state] (v7) at (1,1) {$v''$};
\node[state] (v8) at (-1.75,2) {$x$};
\node[state] (v9) at (-1,-4) {$uv$};

\path[thick] (v1) edge (v2)
                (v1) edge (v10)
                (v1) edge (v11)
                (v12) edge (v10)
                (v12) edge (v11)
                (v12) edge (v8)
                (v13) edge (v5)
                (v13) edge (v10)
                (v9) edge (v3)
                (v9) edge (v4)
                (v8) edge (v6)
                (v8) edge (v7)
                (v1) edge (v3)
                (v1) edge (v4)
                (v1) edge (v5)
                (v6) edge (v3)
                (v6) edge (v2)
                (v7) edge (v5)
                (v7) edge (v4)
                (v12) edge [loop ] (v12)
                (v10) edge [loop] (v10)
                (v11) edge [loop] (v11)
                (v13) edge (v9)
                 (v1) edge (v3)
                 (v8) edge[loop] (v8)
                 (v4) edge[loop](v4)
                 (v5) edge [loop] (v5)
                 (v6) edge [loop] (v6)
                 (v7) edge [loop] (v7)
                 (v2) edge [loop] (v2)
                 (v3) edge [loop] (v3);

                 \node[state] (v21) at (10,0) {$d$};
\node[state] (v22) at (6,-3) {$u'_1$};
\node[state] (v23) at (7.5,-3) {$u'_2$};
\node[state] (v24) at (9,-3) {$v'_1$};
\node[state] (v25) at (10.5,-3) {$v'_2$};
\node[state] (v210) at (12,-3) {$w'_1$};
\node[state] (v211) at (13.5, -3) {$w'_2$};
\node[state] (v212) at (12.75,2) {$w''$};
\node[state] (v213) at (12.75, -4) {$vw$};
\node[state] (v26) at (6.75,1) {$u''$};
\node[state] (v27) at (11,1) {$v''$};
\node[state] (v29) at (9,-4) {$uv$};
\node[state] (v30) at (6.75,3) {$u''_1$};
\node[state] (v31) at (11,3) {$v''_1$};
\node[state] (v32) at (12.75,4) {$w''_1$};

\path[thick] (v21) edge (v22)
            (v30) edge [loop] (v30)
            (v31)  edge [loop]  (v31)
            (v32) edge [loop] (v32)
                (v21) edge (v210)
                (v21) edge (v211)
                (v212) edge (v210)
                (v212) edge (v211)
                (v212) edge (v32)
                (v213) edge (v25)
                (v213) edge (v210)
                (v29) edge (v23)
                (v29) edge (v24)
                (v30) edge (v26)
                (v31) edge (v27)
                (v21) edge (v23)
                (v21) edge (v24)
                (v21) edge (v25)
                (v26) edge (v23)
                (v26) edge (v22)
                (v27) edge (v25)
                (v27) edge (v24)
                (v212) edge [loop ] (v212)
                (v210) edge [loop] (v210)
                (v211) edge [loop] (v211)
                (v213) edge (v29)
                 (v21) edge (v23)
                 (v24) edge[loop](v24)
                 (v25) edge [loop] (v25)
                 (v26) edge [loop] (v26)
                 (v27) edge [loop] (v27)
                 (v22) edge [loop] (v22)
                 (v23) edge [loop,] (v23);
\end{tikzpicture}
}
\end{center}
\caption{The Part of $G'$ representing an edge $uv-vw$ in $\hat{G}$ in the cases $r=6$ (left) and $r=7$ (right).}
\label{fig:surhom}
\end{figure}
\begin{lemma}
    \SurHom$(C'_r)$ is trivial on graphs with diameter at least $2r-1$.
\end{lemma}
\begin{proof}
    If the input is fully reflexive then it is a no-instance. We claim all other instances are yes-instances. Let $d$ be a loopless vertex of the input $G$. Let us consider the vertex sets $V_i$ with distance $i$ from $d$. Since the diameter of $G$ is at least $2r-1$, each of $V_1,\ldots,V_{r-1}$ is non-empty. Consider the map which sends $d$ to $0$, $V_i$ to $i$ (for $i\leq r-1$) and $V_i$ to $r-1$ (for $i\geq r$). This is plainly a surjective homomorphism from $G$ to $P'_r$ (a path on $r$ vertices in which only one end vertex is loopless) and consequently to $C'_r$.
\end{proof}

\begin{corollary}
    \SurHom$(C'_r)$ is in P on all classes of bounded-degree.
    \label{cor:C'r-in-P}
\end{corollary}
\begin{proof}
    If a graph class has degree bound by $r$, then all but at most $1+c^{2r-1}$ number of graphs in that class have diameter greater than $2r-1$. We can brute force the solution to the graphs with diameter at most $2r-1$, while applying the algorithm from the previous lemma to the others.
\end{proof}
Theorem~\ref{t-4} follows directly from Theorem~\ref{thm:C'r-triangle-free} and Corollary~\ref{cor:C'r-in-P}.

\section{Final Remarks}\label{s-con}

We have shown that the C123-framework can be leveraged on enriched graph problems involving multi-edges and loops. We have shown that {\sc Multigraph Matching Cut} and {\sc Multigraph $d$-Cut} have the same classification as C123-problems.

However, our results concerning {\sc Partially Reflexive Stable Cut} are the most interesting. Owing to Corollary~\ref{cor:main-stable-bis}, we can see that {\sc Partially Reflexive Stable Cut} behaves like a C123-problem on ${\cal H}$-subgraph-free graphs unless ${\cal H}$ contains both some pendant subdivision of the net and some pendant subdivision of $H_1$. Thus, the differing behaviour comes from precisely such graphs. We have investigated this phenomenon more closely, and the third point of Corollary~\ref{cor:main-stable-bis} and Theorem~\ref{thm:tala-H2221} truly isolates the boundary of tractability, since $H^{2,2,2,1}_1$ is one vertex away from $H^2_1$. One outstanding question concerns whether {\sc Partially Reflexive Stable Cut} is in P on $\{C_3,H^{k,k,k,1}_1\}$-subgraph-free graphs, for all $k>1$.
We leave the complexity of $\SurHom(C'_5)$ tantalisingly open, in the same state as both $\SurHom(C^*_5)$ and $\SurHom(C^*_6)$.

\bibliography{local}

\section*{Appendix}

\subsection{Proof of Corollary~\ref{cor:main-stable-bis}.}

\

\noindent \textbf{Corollary~\ref{cor:main-stable-bis}}
{\sc Partially Reflexive Stable Cut} has the same classification as a C123-problem on $\mathcal{H}$-subgraph-free graphs, unless $\mathcal{H}$ contains both a pendant subdivision of a net and a pendant subdivision of $H_1$. Additionally, {\sc Partially Reflexive Stable Cut} is NP-complete on $\{C_3,H_1^2\}$-subgraph-free graphs.

\begin{proof}
Let us give an argument that {\sc Partially Reflexive Stable Cut} in in P on graph classes of bounded treewidth. We claim it can expressed by an $\textrm{MSO}$ formula, whereupon we may appeal to Courcelle's Theorem \cite{Courcelle}. For example, if $I$ is a new variable holding on some set of vertices, then $\forall x,y \ (I(x) \wedge I(y)) \rightarrow \neg E(x,y)$ 
    expresses that $I$ is an independent set, while
    \[
    \exists U \exists x, y \ U(x) \wedge \neg U(y) \wedge \forall x, y \ (U(x) \wedge \neg U(y) \wedge \neg I(x) \wedge \neg I(y)) \rightarrow \neg E(x,y)    
    \]
    (where $U$ is a new unary relation symbol) expresses that $E$ minus $I$ is disconnected.

If $\cal{H}$ contains any (disjoint union of) subdivisions of a path or a claw then the result clearly holds. Let us assume it does not.

If $\cal{H}$ contains a a pendant subdivision of $H_1$, but not a pendant subdivision of a net, then we may use the proof from the first part of Theorem~\ref{thm:siani-main}, taking care that our paths are long enough to omit any of the $H_1, H_2, H_3, \ldots, C_4, C_5, \ldots$ in ${\cal H}$.

If $\cal{H}$ contains only a pendant subdivision of the net, but no pendant subdivision of $H_1$, then we may use the proof from the second part of Theorem~\ref{thm:siani-main}. This construction is triangle-free so plainly omits any pendant subdivision of the net, though we need to ensure that our paths are long enough to omit any of the $H_2, H_3, \ldots, (C_3,) C_4, C_5, \ldots$ in ${\cal H}$.

For the $\{C_3,H_1^2\}$-subgraph-free case, let us return to the graph we had that was $\{C_4,C_5,\ldots,C_k,H_1,H_2,\ldots,H_k\}$-subgraph-free in the first case of Theorem~\ref{thm:siani-main}. It had many $C_3$s with one looped vertex. The idea is to subdivide only once, replacing these with $C_4$s, similarly to what we did in the proof of the second part of Theorem~\ref{thm:siani-main} (see Figure~\ref{fig:stable-C4} for guidance). The resulting graph omits both $C_3$ and $H_1^2$ as a subgraph.
\end{proof}

\begin{figure}[h]
\begin{center}
$
   \xymatrix{
& \bigcirc  \ar@{-}[d]  & \\
& \bigcirc \ar@{-}[dl] \ar@{-}[dr] & \\
\bullet \ar@{-}[rr] \ar@{-}[d]   & & \bullet \ar@{-}[d] \\
\bigcirc & & \bigcirc \\
}
$
\hspace{2cm}
$
  \xymatrix{
\bigcirc  \ar@{-}[d]  & \\
\bigcirc \ar@{-}[d] \ar@{-}[rr] & & \bigcirc \ar@{-}[d] \\
\bullet \ar@{-}[rr] \ar@{-}[d]   & & \bullet \ar@{-}[d] \\
\bigcirc & & \bigcirc \\
}
$
\end{center}
\caption{The substitution of the $C_3$s (left) by $C_4$s (right) in the proof of Corollary~\ref{cor:main-stable-bis}. Note that open circles represent looped vertices.}
\label{fig:stable-C4}
\end{figure}

\subsection{Further observations on Partially Reflexive Stable Cut}

\begin{lemma}\label{lem:gen-obs}
There exists a polynomial time algorithm which, given an instance $G$ of {\sc Partially Reflexive Stable Cut}, either finds that $G$ is a yes-instance or returns an equivalent instance $G'$ with the following properties:
\begin{itemize}
    \item[1.] Every vertex of degree $2$ is reflexive and no vertex has degree ($0$ or) $1$.
    \item[2.] No degree $2$ vertex is contained in a triangle.
    \item[3.] There are no twin vertices.
    \item[4.] For any pair $u,v \in V$, if $N(u) \subseteq N(v)$, then $u$ must be reflexive and $v$ must not be.
\end{itemize}

Moreover, any simple graph contained as a subgraph in $G'$ is contained in $G$.

\end{lemma}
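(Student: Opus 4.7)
The plan is to prove the lemma by giving a short list of polynomial-time reduction rules, applied to exhaustion, each of which only deletes vertices or edges or adds loops to existing vertices. Since these operations can only shrink the underlying simple graph or leave it unchanged, the ``moreover'' clause follows automatically. For soundness, I would combine the rules with a preprocessing pass that decides any instance of constant size directly in polynomial time; this sidesteps the small base cases where the local rules can genuinely fail (for instance, the irreflexive path on three vertices admits a stable cut in which the middle degree-2 vertex is indispensable in the cutset, so the loop-insertion rule below cannot be safely applied there).

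Properties 1 and 2 are enforced by three rules. First, delete any pendant $v$ with neighbour $u$: $v$ need never belong to the cutset, so it can be placed alongside $u$ in any stable cut of $G$, and conversely any stable cut of $G-v$ extends to $G$ by the same placement (choosing a nonempty side arbitrarily if $u \in C$). Second, add a loop at any irreflexive degree-2 vertex $v$ with neighbours $u,w$: one direction is trivial, and in the other I argue that any stable cut of $G$ with $v \in C$ can be rerouted to avoid $v$ by shifting $v$ onto the side of $u$ (or $w$), using the low local degree of $v$ to verify that no new crossing edge is introduced. Third, delete any degree-2 vertex on a triangle $uvw$: the adjacency $uw$ forbids placing $u,w$ on opposite sides, so $v$ cannot belong to any cutset and is passive.

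Properties 3 and 4 will follow from twin and neighbourhood-domination reductions. A pair of twins of the same reflexivity type may be collapsed by deleting one copy, since they play identical roles in every stable cut. When $N(u) \subseteq N(v)$ violates the pattern of Property 4 (so $u$ is irreflexive or $v$ is reflexive), the dominated vertex is deleted, and in any stable cut it is reassigned by inheriting the placement of its dominator; the loop constraints combine with the domination so that this reassignment never violates stability. The main obstacle I foresee is verifying the soundness of the loop-insertion rule at irreflexive degree-2 vertices: the exchange argument must carefully track how shifting $v$ interacts with its neighbours $u,w$ and with any further degree-2 chain to which $v$ belongs, and the small base cases where no exchange exists must be absorbed into the preprocessing. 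All other verifications are short once this hurdle is cleared.
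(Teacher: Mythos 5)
There is a genuine gap, and it sits exactly where you predicted: the loop-insertion rule at irreflexive degree-$2$ vertices is unsound, and the failures are \emph{not} confined to constant-size instances, so your proposed preprocessing pass does not rescue it. Consider two disjoint fully reflexive cliques $K$ and $K'$, each on $n$ vertices, joined by $k\geq 2$ pairwise non-adjacent irreflexive vertices $v_1,\dots,v_k$, where $v_i$ is adjacent to distinct attachment points $u_i\in K$ and $w_i\in K'$. Every $v_i$ has degree $2$ and is irreflexive, no $v_i$ is a cut vertex, and the only loopless vertices are the $v_i$, so the unique stable cutset is $C=\{v_1,\dots,v_k\}$ with sides $K$ and $K'$: every $v_i$ is indispensable. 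Your exchange argument cannot reroute here --- shifting $v_i$ onto the side of $u_i$ leaves the edge $v_iw_i$ crossing the cut --- and after adding a loop at any $v_i$ the graph becomes a no-instance, since $K$ and $K'$ stay connected through $v_i$ whatever subset of the remaining $v_j$ is deleted. The instance has $2n+k$ vertices, arbitrarily large. The same phenomenon already kills the rule on a long all-irreflexive path. The paper avoids this by handling irreflexive degree-$2$ vertices differently: it first checks whether $v$ is a cut vertex or whether $N(v)$ is a stable cutset (in which case it returns ``yes''), and otherwise \emph{deletes} $v$ rather than looping it. Deletion is sound precisely because it also removes the edges $vu$ and $vw$, so $v$ no longer needs to be ``cut''; a loop keeps those edges while forbidding $v$ from the cutset, which is a strictly different (and here wrong) operation. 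Note also that Property~1 as stated is achieved in the paper by this deletion, not by making such vertices reflexive.

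Your treatment of the remaining properties is broadly in line with the paper's (pendant deletion, deleting reflexive degree-$2$ vertices on triangles, twin and domination reductions, all of which preserve the ``moreover'' clause since they only delete vertices or add loops). One smaller point to check if you repair the main rule: for adjacent (true) twins of degree at least $3$ the paper deletes one twin \emph{and makes the survivor reflexive}, using that neither twin lies in a minimal stable cut; simply deleting one copy ``since they play identical roles'' is not obviously enough, because the surviving twin could be used in a cutset of the reduced graph in a way that does not lift back to $G$.
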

\begin{proof}
{\textbf {Property 1}}:
If a vertex $v$ of $G$ has degree ($0$ or) $1$ then either $N(v)$ is a stable cut of $G$ has a stable cut if and only if $G \setminus \{v\}$ does. Therefore, we either find that $G$ is a yes-instance or obtain an equivalent instance by deleting $v$.

Let $v$ be an irreflexive degree $2$ vertex. If $v$ is a cut vertex or $N(v)$ forms a stable cutset then $G$ is a yes-instance. Else, $G$ has a stable cutset if and only if $G \setminus {v}$ does and we may delete $v$ to obtain an equivalent instance. Repeating this process exhaustively, we either find that $G$ is a yes-instance or obtain an instance in which every vertex has degree at least $2$ and every degree $2$ vertex is reflexive.

\textbf{Property 2}:
Consider a degree $2$ vertex $v$ contained in a triangle. This vertex is reflexive by Property 1. Since the neighbourhood of $v$ is a clique, $N(v)$ is not a stable cut. Since $v$ is reflexive, it does not belong to any stable cut of $G$. Therefore, any stable cut of $G$ is a stable cut of $G \setminus v$. Since exactly one side of any stable cut in $G \setminus v$ contains a neighbour of $v$, $G$ has a stable cut if and only if $G \setminus v$ does and we may delete $v$. Repeating this process exhaustively, we obtain an instance in which every degree $2$ vertex has two independent neighbours.

\textbf{Property 3}:
Assume that $G$ contains a vertex $x$ with a true twin $y$. From Property $2$ we may assume $x$ has degree at least $3$. Assume that $G$ has a stable cut $C$ containing at least one of $x$ and $y$. It contains at most one of $x$ and $y$ since they are adjacent. Without loss of generality, $C$ contains $x$. Furthermore, since $x$ and $y$ have the same open neighboburhood, removing $x$ from $C$ does not connect the graph $G \setminus {C}$. Therefore, neither $x$ nor $y$ are contained in a minimal stable cut of $G$. Therefore, $G$ has a stable cut if and only if $G \setminus \{y \}$ has a stable cut which does not contain $x$. In other words, We obtain an equivalent instance by deleting $y$ and making $x$ reflexive.

We now assume that $x$ and $y$ are false twins. If there exists a stable cut of $G$ containing exactly one, say $x$, of $x$ and $y$ then, since $x$ and $y$ have the same open neighbourhood, there exists a stable cut containing neither of them. Therefore, any minimal stable cut $C$ either contains both $x$ and $y$ or neither of the two. In particular, if either of $x$ and $y$ is reflexive then we may make the other reflexive. Now, $G$ has a stable cut if and only if $G \setminus \{x \}$ does. In other words, we obtain an equivalent instance by deleting $x$.

Repeating these steps exhaustively, we obtain a graph with no twin vertices.

\textbf{Property 4}:
Note that $u$ and $v$ are non-adjacent since $v \notin N(v)$. Assume that $u$ is irreflexive. If $N(u)$ forms a stable cut then $G$ is a yes-instance, Since every neighbour of $u$ is a neighbour of $v$, $u$ is not a cut vertex. Therefore, any stable cut of $G$ is a stable cut of $G \setminus u $. On the other hand, consider a stable cut $C$ of $G \setminus u$. If $v$ belongs to $C$ then, since $u$ is irreflexive, we may add $u$ to $C$ to obtain a stable cut of $G$. If $v$ does not belong to $C$ then we add $u$ to the same side as $v$ to obtain a stable cut of $G$. Therefore, if $u$ is irreflexive then $G$ has a stable cut if and only if $G \setminus \{u\}$ does and we may delete $u$ to obtain an equivalent instance.

Now assume that both $u$ and $v$ are reflexive. Since every neighbour of $u$ is a neighbour of $v$, $G$ has a stable cut if and only if $G \setminus u$ does and we may delete $u$ to obtain an equivalent instance.

Finally, since each of these properties is obtained through vertex deletion, any simple subgraph contained in $G'$ is contained in $G$.
\end{proof}

The next lemma deals with three additional properties which apply specifically to $H_1$-subgraph-free instances.

\begin{lemma}\label{lem:H-obs}
There exists a polynomial time algorithm which, given any $H_1$-subgraph-free instance $G$ of {\sc Partially Reflexive Stable Cut}, either decides $G$ or returns an equivalent instance $G'$ with the following properties:
\begin{enumerate}
\item[h1.] Every irreflexive vertex is contained in a triangle with another irreflexive vertex.
\item[h2.] No two triangles share a vertex.
\item[h3.] Each vertex of any triangle has exactly one neighbour outside the triangle and this vertex has degree 2.
\end{enumerate}

Moreover, any simple graph contained as a subgraph in $G'$ is contained in $G$.
\end{lemma}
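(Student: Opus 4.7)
The plan is to supply, for each of h5, h6, h7, a polynomial-time reduction that, when the property is violated, produces a strictly smaller equivalent instance still satisfying properties 1--4 of Lemma~\ref{lem:gen-obs}. Each reduction only deletes vertices or toggles loops, so every simple subgraph of the reduced graph is a simple subgraph of $G$; in particular $H_1$-subgraph-freeness is preserved throughout and the final sentence of the statement follows. Applying the three reductions exhaustively, in the order h5, h6, h7, then yields the desired $G'$.

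For h5, let $v$ be an irreflexive vertex that lies in no triangle with a second irreflexive vertex; by property~1 we may assume $v$ has degree at least $3$. I would try to show that $v$ may safely be made reflexive: given any stable cut $(A, B, C)$ of $G$ with $v \in C$, rebuild a stable cut in which $v$ lies in $A$ or $B$. The key observation is that for every edge $xy$ between two neighbours of $v$ at least one of $x, y$ is reflexive (otherwise $\{v, x, y\}$ would be a triangle through $v$ with a second irreflexive vertex). A case analysis on the distribution of the reflexive and irreflexive neighbours of $v$ across $A$ and $B$, combined with property~4, should then let us shift $v$ to one side without creating an $A$-$B$ edge.

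For h6, suppose distinct triangles $T_1$ and $T_2$ share a vertex $u$. If they share an edge, the induced $K_4 - e$ forces true or false twin pairs that property~3 has already eliminated, a contradiction. Otherwise $u$ has four distinct triangle-neighbours; applying h5 (already established) to any irreflexive one among these and chasing the resulting triangle partners outward yields a copy of $H_1$ as a subgraph, contradicting $H_1$-subgraph-freeness. For h7, fix a triangle $T = \{a, b, c\}$ and suppose $a$ has an outside neighbour $x$ of degree at least $3$, or a second outside neighbour $y$. By h5 and h6, $x$ lies in a triangle $T_x$ vertex-disjoint from $T$; pasting $T$, $T_x$, and the edge $ax$ together, extended once more by $y$ or by the extra neighbour of $x$, contains $H_1$ as a subgraph, again contradicting the hypothesis. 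So each triangle vertex has exactly one outside neighbour, which then by property~1 must have degree $2$.

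The main obstacle is the reduction for h5: one must show robustly that $v$ can be shifted out of the stable cutset in every configuration of reflexivities and side-memberships of $N(v)$. This is where all four properties of Lemma~\ref{lem:gen-obs} must be combined with the rigidity provided by $H_1$-subgraph-freeness, and it is the only step requiring an intrinsically algorithmic (rather than purely structural) argument; the h6 and h7 steps, by contrast, reduce to finding an $H_1$ subgraph whenever the property fails.
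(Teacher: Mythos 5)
Your plan for h5 is where the argument breaks, and you have correctly identified it as the crux without closing it. The reduction you propose (making $v$ reflexive, after showing that $v$ can always be shifted out of the cutset) is not sound: an irreflexive vertex all of whose neighbours are reflexive can be \emph{essential} to every stable cut. For instance, $v$ may be a cut vertex in a graph whose only irreflexive vertex is $v$ itself (two long reflexive cycles joined through $v$ already satisfies properties 1--4); then $\{v\}$ is the unique stable cut, and adding a loop to $v$ turns a yes-instance into a no-instance. No case analysis on how $N(v)$ distributes over $A$ and $B$ can rescue this, because when $v$'s neighbours genuinely lie on both sides, $v$ cannot leave $C$. The paper's route is different and simpler: by $H_1$-subgraph-freeness together with property~1, any two \emph{adjacent} irreflexive vertices are automatically in a common triangle (each has degree at least $3$, so otherwise their four remaining neighbours are distinct and span an $H_1$ with these two vertices as centres). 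Hence the only offending vertices for h5 are irreflexive vertices with no irreflexive neighbour at all, and for those the correct move is to test whether $v$ is a cut vertex (yes-instance) and otherwise \emph{delete} $v$; deletion is safe precisely because none of $v$'s reflexive neighbours can ever enter a cutset, so removing $v$ from the graph and from any cutset containing it preserves the cut. Your reflexivization step must be replaced by this detection-or-deletion argument.

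The claim that "the h6 and h7 steps reduce to finding an $H_1$ subgraph whenever the property fails" is also not accurate. For two triangles sharing an edge (a diamond or $K_4$), the two apex vertices are not automatically twins --- they may have distinct outside neighbours --- so property~3 does not dispose of this case. The paper must first use $H_1$-freeness to confine the outside neighbourhoods of the diamond and then perform genuine reductions: deleting the whole diamond, deleting one apex only after establishing that the apices really are true twins, or deciding the instance outright. Only the case of two triangles sharing exactly one vertex is a pure contradiction, and even there the contradiction is with property~2 (the four outer vertices are forced to have degree $2$ inside a triangle), not a direct $H_1$. In h7, the conclusion that the unique outside neighbour has degree $2$ does not follow from property~1, which only bounds degrees from below; it follows from $H_1$-freeness directly, with the relevant triangle vertex and its outside neighbour as the two degree-$3$ centres. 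Your detour through a second triangle $T_x$ additionally needs $x$ to be irreflexive, which is not given.
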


\begin{proof}
We may assume that $G$ is connected and satisfies Properties 1 to 4.

\textbf{Property h1}:
Let $S$ be the set of irreflexive vertices in $G$. Note that any stable cut $C$ of $G$ is contained in $S$. By Property 1, every vertex in $S$ has degree at least $3$ in $G$. Let $v$ be a vertex in $S$ with no neighbours in $S$. Either $v$ is a cut vertex and $G$ is a yes-instance, or $G$ has a stable cut if and only if $G \setminus v$ does. Therefore we may remove any vertex which is isolated in $G[S]$ and is not a cut vertex. If this process leaves $S$ empty then $G$ is a no-instance. Otherwise, by $H_1$-subgraph-freeness, any two adjacent vertices in $S$ are contained in a triangle.

\textbf{Property h2}:
First assume that $G$ contains a diamond subgraph with vertex set $D={x,y,z,w}$ where $x$ and $y$ have degree $2$ in the diamond. Assume that $x$ and $y$ are adjacent. Then, by $H_1$-subgraph-freeness, at most one vertex outside $D$ has a neighbour in $D$. If $G=K_4$ then $G$ is a no-instance. Otherwise, either the neighbour $v$ of $D$ forms a stable cut or $G$ has a stable cut if and only if $G \setminus D$ does.

Now assume that $x$ and $y$ are non-adjacent. By Property $2$, each of them has at least one further neighbour. By $H_1$-subgraph-freeness, each of $x$ and $y$ has exactly one further neighbour. Let $a$ be a neighbour of $x$ outside $D$ and $b$ a neighbour of $y$ outside $D$. Neither $z$ nor $w$ has any neighbours outside $D \cup \{a, b \}$ by $H_1$-subgraph-freeness. If $a=b$ then $x$ and $y$ are twins, contradicting Property $3$.

Otherwise, both $a$ and $b$ have degree $2$ by $H_1$-subgraph-freeness and must be reflexive by Property $1$. Additionally, neither $z$ nor $w$ has a neighbour outside $D$ by $H_1$-subgraph-freeness. In particular, $z$ and $w$ are true twins. By Property $4$ we obtain an equivalent instance $G'$ by deleting the vertex $z$.

Now assume that the set $T \subseteq V(G)$  forms a subgraph of $G$ consisting of two triangles sharing exactly one vertex $v$. By $H_1$-subgraph-freeness and Property 1, the only vertex of $T$ which may have a neighbour outside $T$ is $v$. In particular, since we may assume that $G$ contains no diamond subgraph, the remaining vertices of $T$ have degree $2$, contradicting Property $2$.

\textbf{Property h3}:
By Property $2$ every vertex of any triangle $T$ has a neighbour outside $T$. By Property h$2$ no vertex outside $T$ has more than one neighbour in $T$. By $H_1$-subgraph-freeness each neighbour of $T$ then has degree $2$.

Finally, since each of these properties is obtained through vertex deletion, any simple subgraph contained in $G'$ is contained in $G$.
\end{proof}

\begin{lemma}
{\sc Partially Reflexive Stable Cut} is polynomial-time solvable for $\{H_1, r N_{1,1,1}\}$-subgraph-free graphs for any $r \geq 1$.
\end{lemma}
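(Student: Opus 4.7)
The plan is to combine Lemmas~\ref{lem:gen-obs} and~\ref{lem:H-obs} with the $rN_{1,1,1}$-subgraph-freeness hypothesis to bound the number of irreflexive vertices of the reduced instance by a constant depending only on $r$, and then to enumerate the possible cutsets by brute force.

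First I would apply both lemmas to an input $G$, obtaining in polynomial time an equivalent instance $G'$ satisfying properties 1--4 and h5--h7; since each reduction step deletes a vertex, $G'$ remains $\{H_1, rN_{1,1,1}\}$-subgraph-free. The key structural observation is that every triangle $T = \{a, b, c\}$ of $G'$ has three external neighbours $a', b', c'$, each of degree exactly two and reflexive (by h7 and property~1), and that these three vertices are pairwise distinct: if $a' = b'$, the set $\{a, b, a'\}$ would form a triangle sharing two vertices with $T$, contradicting h6. In particular $V(T) \cup \{a', b', c'\}$ induces a copy of $N_{1,1,1}$ in $G'$.

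Next I would build an auxiliary graph $\mathcal{A}$ whose vertices are the triangles of $G'$, with an edge between two triangles precisely when they share an external neighbour. Because each external neighbour has degree exactly two, one of its neighbours lies in its owning triangle and the other can lie in at most one further triangle, so each triangle has at most three neighbours in $\mathcal{A}$, i.e.\ $\Delta(\mathcal{A}) \leq 3$. An independent set $I$ in $\mathcal{A}$ is a family of triangles with pairwise disjoint external-neighbour sets, and, combined with the triangle-disjointness guaranteed by h6, this yields $|I|$ vertex-disjoint copies of $N_{1,1,1}$ in $G'$. Hence the $rN_{1,1,1}$-subgraph-freeness forces $\alpha(\mathcal{A}) \leq r - 1$, and the greedy bound $\alpha(\mathcal{A}) \geq |V(\mathcal{A})|/(\Delta(\mathcal{A}) + 1)$ gives $|V(\mathcal{A})| \leq 4(r-1)$. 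By h5, every irreflexive vertex of $G'$ lies in a triangle, so $G'$ contains at most $12(r-1)$ irreflexive vertices in total.

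Finally I would solve the reduced instance by brute force. A partially reflexive stable cut of $G'$ corresponds to a non-empty independent set $C$ of irreflexive vertices such that $G' - C$ has at least two non-empty connected components (which then form the two sides of the partition). With only $O(r)$ irreflexive vertices available, there are only $2^{O(r)}$ candidate sets $C$, and for each we can check in polynomial time whether $C$ is independent and whether $G' - C$ has the required components, answering positively as soon as a valid $C$ is found. For any fixed $r$ this is a polynomial-time algorithm.

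The main obstacle I anticipate is the correspondence in the third paragraph between independent sets of $\mathcal{A}$ and families of vertex-disjoint net subgraphs of $G'$. To make it airtight one must verify that every copy of $N_{1,1,1}$ in $G'$ arises from a canonical triangle together with its three uniquely determined external neighbours, and that each external neighbour is shared by at most one further triangle; both facts follow by carefully combining h5, h6, h7 and property~1.
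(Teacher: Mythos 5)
Your proposal is correct and follows essentially the same route as the paper: apply the reductions of Lemmas~\ref{lem:gen-obs} and~\ref{lem:H-obs}, use h5--h7 to show each triangle extends to a net whose pendant vertices clash with at most three other triangles, deduce via a degree-$3$ conflict graph that more than $O(r)$ irreflexive vertices would yield $r$ disjoint nets, and finish by brute force over the constantly many irreflexive vertices. Your constant ($12(r-1)$ versus the paper's $12r-1$) differs immaterially, and your explicit auxiliary-graph formulation just makes the paper's counting argument more verbose.
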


\begin{proof}
 We show that any $rN_{1,1,1}$-subgraph-free graph has at most $12r-1$ irreflexive vertices. Assume otherwise. By Properties h$2$ and h$3$, this implies that $G$ contains at least $4r$ vertex disjoint triangles. Since, by Property h$3$, any such triangle shares a neighbour with at most three others, $G$ contains at least $r$ disjoint copies of $N_{1,1,1}$, contradicting $rN_{1,1,1}$-subgraph-freeness. Therefore, we may test each possible combination of irreflexive vertices to decide whether it forms a stable cut of $G$.
\end{proof}
Together with Corollary~\ref{cor:main-stable-bis}, the previous lemma yields a proof of Theorem~\ref{t-3}. We continue however, with a view to obtaining some further results.

The following lemma is strictly incomparable to the last.
\begin{lemma}
{\sc Partially Reflexive Stable Cut} is polynomial-time solvable for $\{H_1, N_{1,1,\ell}\}$-subgraph-free graphs for any $l \geq 0$.
\end{lemma}

\begin{proof}
We may assume that $G$ contains a path $P$ of length $2\ell$ as a subgraph, else the problem is polynomial-time solvable since $P_k$-subgraph-free graphs have bounded treewidth. If every triangle in $G$ contains at least one vertex of $P$ then $G$ contains at most $2\ell$ triangles by Property h$2$ and hence at most $6\ell$ irreflexive vertices by Property h$1$. In this case we test every subset of these vertices to decide whether it forms a stable cut. Otherwise, assume $G$ contains at least $6\ell+1$ irreflexive vertices and consider a triangle $T={x,y,z}$ containing no vertex of $P$. Without loss of generality, assume that $x$ is an endpoint of a shortest path $Q$ from $T$ to $P$, intersecting $P$ at a vertex $v$. No vertex of $T$ has a neighbour on the interior of $P$ since any neighbour of $T$ has degree $2$ by Property h$3$. The vertex $v$ divides $P$ into two segments, one of which has length at least $\ell$. Therefore we may extend $Q$ to a path $Q'$ of length at least $\ell$ and obtain a subgraph $N_{1,1,\ell}$ by taking $T, Q'$ and two further neighbours of $y$ and $z$. This implies that $G$ contains at most $6\ell$ irreflexive vertices, a contradiction.
\end{proof}

The graph $H_1^{2,2,2,1}$ is obtained by subdividing $3$ out of the $4$ pendant edges of $H_1$ (see Figure~\ref{fig:subH1}).
\begin{figure}
    \centering
    \begin{tikzpicture}[thick]
        \node[draw, circle, fill, inner sep=2pt] (A) at (0,1) {};
        \node[draw, circle, fill, inner sep=2pt] (A1) at (0,0) {};
        \node[draw, circle, fill, inner sep=2pt] (B) at (0,-2) {};
        \node[draw, circle, fill, inner sep=2pt] (B1) at (0,-3) {};
        \node[draw, circle, fill, inner sep=2pt] (C) at (2,1) {};
        \node[draw, circle, fill, inner sep=2pt] (C1) at (2,0) {};
        \node[draw, circle, fill, inner sep=2pt] (D) at (2,-2) {};
        \node[draw, circle, fill, inner sep=2pt] (E) at (0,-1) {};
        \node[draw, circle, fill, inner sep=2pt] (F) at (2,-1) {};
        
        \draw (A) -- (A1) -- (B1) -- (B);
        \draw (C) -- (C1) -- (D);
        \draw (E) -- (F);
    \end{tikzpicture}
    \caption{ $H_1^{2,2,2,1}$}
    \label{fig:subH1}
\end{figure}
We now work towards a proof of the result on the penultimate line in Table~\ref{tab:1}. The proof relies on showing that, given some $\{H_1^{2,2,2,1}, C_3\}$-subgraph-free graph after applying Lemmas~\ref{lem:gen-obs} and \ref{lem:small-cut}, the set of irreflexive vertices, $S$, forms a stable set. This implies $G$ admits a (partially reflexive) stable cut if, and only if $S$ is a cutset.

\begin{lemma}\label{lem:small-cut}
For any constant $k$, any instance of {\sc Partially Reflexive Stable Cut} can be reduced in polynomial time to an equivalent instance $G'$ such that for every $Z \subseteq V(G')$ with $\vert Z \vert \leq k$, if $G'[Z]$ is connected with maximum degree at least $3$, then $Z$ contains at least three vertices with some neighbour in $V(G')\setminus Z$.
\end{lemma}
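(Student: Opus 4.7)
The plan is to apply a local reduction rule iteratively until no bad $Z$ remains. While there exists $Z\subseteq V(G)$ with $|Z|\le k$, $G[Z]$ connected of maximum degree at least three, and at most two vertices of $Z$ having a neighbour outside $Z$, I replace the interior $Z \setminus B(Z)$ (where $B(Z) := \{v \in Z : N(v) \setminus Z \neq \emptyset\}$) by a bounded-size gadget equivalent for {\sc Partially Reflexive Stable Cut}. Since $k$ is a fixed constant, all candidate $Z$ can be enumerated in $O(n^k)$ time, the bad property is tested in $O(1)$ per candidate, and each rule application strictly decreases $|V(G)|$, so the whole procedure is polynomial.

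For a bad $Z$ I first compute its \emph{signature}, the set of assignments $\chi : B(Z) \to \{A, C, B\}$ that extend to an assignment of all of $Z$ satisfying the local stable-cut conditions: no $A$--$B$ edge, no $C$--$C$ edge, and every $C$-vertex is irreflexive. Since $|Z| \le k$, the signature is computed by brute force in $O(1)$ time; if it is empty, $G$ has no stable cut and a trivial no-instance is returned. Otherwise I replace $Z \setminus B(Z)$ by a pre-designed gadget of constant size, attached only to $B(Z)$, that realises the same signature. Because $|B(Z)| \le 2$, the signature ranges over a finite set (at most $2^9$ Boolean functions), and for each possibility the gadget is built from short reflexive paths, loop toggles at the boundary vertices $b_1,b_2$, and at most one pendant irreflexive attachment per boundary vertex; all such gadgets have internal degree at most two and thus cannot themselves form a new bad $Z'$ in subsequent iterations.

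The main obstacle is the construction of a bounded gadget of internal degree at most two for every possible signature, which is a finite case analysis. The key simplification is that when $|B(Z)| \le 2$, the signature encodes only combinations of very local primitives --- ``$b_1$ and $b_2$ on the same side'', ``on opposite sides'', ``one forced into $C$'', ``one forbidden from $C$'' --- each realisable by an elementary subgraph attached to $b_1$ and $b_2$. After at most $n$ applications no bad $Z$ remains, and the resulting graph is the desired $G'$.
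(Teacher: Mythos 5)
Your high-level strategy coincides with the paper's: enumerate all candidate sets $Z$ of size at most $k$ in time $O(n^k)$, observe that a bad $Z$ has at most two ``gateway'' vertices, and replace the interior of $Z$ by something simpler attached only to those gateways. However, the step you flag as ``the main obstacle'' --- exhibiting, for every realisable boundary signature, a constant-size gadget of internal degree at most two --- is precisely the content of the lemma, and you do not carry it out. You assert that each of the (up to $2^9$) signatures decomposes into ``elementary primitives'', but this is not obvious and is never verified. The paper avoids this enumeration entirely: after a preliminary check, it shows directly that the interior can always be replaced by a shortest $v$--$v'$ path inside $G[Z]$ with loops added to all (or all but one) of its vertices, or deleted outright, and it verifies in each case that at least one vertex is removed (which you need for termination; your claim that the replacement ``strictly decreases $|V(G)|$'' is unsubstantiated, since the interior being replaced also has constant size).

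There is a second, more structural gap. Your signature records only which assignments $\chi : B(Z) \to \{A,C,B\}$ extend to a locally consistent colouring of $Z$; it does not record whether the interior of $Z$ can by itself supply a nonempty side of the cut --- equivalently, whether $Z$ already contains a stable cutset of $G$. A gadget matching your signature therefore need not preserve the existence of cuts whose separation is witnessed entirely inside $Z$. The paper handles this with its step \textbf{s1} (test whether some subset of $Z$ is already a stable cutset of $G$ and, if so, accept immediately), and all subsequent reasoning in \textbf{s2}/\textbf{s3} relies on that case having been excluded; your plan needs the same preliminary test. Finally, note that the lemma is applied inside the proof of Theorem~\ref{thm:tala-H2221} to a $\{H_1^{2,2,2,1}, C_3\}$-subgraph-free graph, and the later argument requires that every simple subgraph of $G'$ already occur in $G$. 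The paper's reduction only deletes vertices and adds loops, so this is automatic; inserting pre-designed gadgets with pendant attachments does not guarantee it, so even a completed version of your construction would need an extra argument (or a restriction to path-shaped gadgets) to be usable where the lemma is actually needed.
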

\begin{proof}
For each $Z \subseteq V$ such that $3 \leq \vert Z \vert \leq k$ and $G[Z]$ is connected with maximum degree $\geq 3$. If $Z$ meets one of the following conditions, we either return that $G$ is a yes-instance or find some $G'$, with size less than $G$ such that $G'$ has a partially reflexive stable cut if, and only if $G$ does. Further, every simple subgraph of $G'$ is contained in $G$.

\begin{itemize}
        \item[\textbf{s1.}] If $Z$ contains some partially reflexive stable cut for $G$ return yes.
        \item[\textbf{s2.}] If there is some $v \in Z$ such that every path between $Z$ and $G \setminus Z$ contains $v$. We call this a gateway vertex. From \textbf{s1.} we may assume $Z$ does not contain some partially reflexive stable cut meaning $v$ must be reflexive. Further, no vertex of $Z$ is contained in some minimal stable cut. Let $G' =G \setminus (Z \setminus v)$.
        \item[\textbf{s3.}] If there is some pair $v,v' \in Z$ such that every path between $Z$ and $G \setminus Z$ contains either $v$ or $v'$. We call these a gateway vertices. We again may assume $Z$ does not contain some stable cut. Let $P$ be the shortest path from $v$ to $v'$ in $G[Z]$.

        We first consider the case where there is some stable set $C \subseteq Z \setminus \{v,v'\}$ such that $v$ and $v'$ are disconnected in $G[Z \setminus C]$. If $v$ is irreflexive then $C \cup v$ is a stable cut, that is we may assume $v$ and symmetrically $v'$ have a self-loop. Let $w$ be the first irrefexive vertex in $P$. We obtain $G'$ by making every vertex in $P \setminus w$ reflexive and removing vertices $Z \setminus P$.

        If there is no such set $C$, we consider the following cases: both $v,v'$ are reflexive, one is reflexive or both are irreflexive. In the first case as $v,v'$ must be in one partition of any stable cut meaning we may obtain $G'$ by making every vertex in $P$ reflexive and removing vertices $Z \setminus P$. In the second case without loss say $v$ is irreflexive. As there is no stable set $C$ as described above $Z \setminus v$ must be contained in one partition of any stable cut. We now obtain $G'$ by making every vertex in $P \setminus v$ reflexive and removing vertices $Z \setminus P$. Finally, if $v, v'$ are irreflexive, as $v,v'$ are not a stable cut they must be adjacent. No vertex of $Z \setminus \{v,v'\}$ is contained in some minimal stable cut so let $G' =G \setminus (Z \setminus \{v,v'\})$. Given $G[Z]$ contains at least one degree 3 vertex, in each case at least one vertex has been removed.
    \end{itemize}
\end{proof}

\begin{theorem}
{\sc Partially Reflexive Stable Cut} is polynomial-time solvable for $\{H_1^{2,2,2,1}, C_3\}$-subgraph-free graphs.
\label{thm:tala-H2221}
\end{theorem}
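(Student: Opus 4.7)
The plan is to apply Lemmas~\ref{lem:gen-obs} and~\ref{lem:small-cut} (with $k$ chosen as a small constant, say $k = 9$, large enough to accommodate a configuration of size at most $9$) to reduce the input to an equivalent instance $G'$ satisfying the structural properties of both lemmas. I would then argue that in $G'$ the set $S$ of irreflexive vertices is a stable set. Since any partially reflexive stable cut must be contained in $S$, and $S$ itself is stable, $G'$ admits a partially reflexive stable cut if and only if $S$ is a vertex cutset, which can be checked in polynomial time by testing whether $G' \setminus S$ is disconnected.

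The heart of the argument is the claim that $S$ is independent. I would proceed by contradiction: suppose there is an edge $uv \in E(G')$ with $u,v \in S$. By Property~1 of Lemma~\ref{lem:gen-obs}, both $u$ and $v$ have degree at least $3$. By $C_3$-subgraph-freeness, $u$ and $v$ share no common neighbour, so $u$ has distinct neighbours $v, a_1, a_2$ and $v$ has distinct neighbours $u, b_1, b_2$, with $\{a_1,a_2\} \cap \{b_1,b_2\} = \emptyset$. Thus $Z = \{u, v, a_1, a_2, b_1, b_2\}$ already induces an $H_1$-subgraph of $G'$; note that $G'[Z]$ is connected and has maximum degree at least $3$, so Lemma~\ref{lem:small-cut} yields at least three vertices of $Z$ with a neighbour outside $Z$. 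Since $N(u), N(v) \subseteq Z$, these three must lie among the four pendant vertices $a_1, a_2, b_1, b_2$.

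After relabelling, assume $a_1, a_2, b_1$ have external neighbours $a_1', a_2', b_1'$ respectively. If the three are pairwise distinct, then the nine vertices $\{u, v, a_1, a_1', a_2, a_2', b_1, b_1', b_2\}$ together with the edges $uv$, $u a_1$, $a_1 a_1'$, $u a_2$, $a_2 a_2'$, $v b_1$, $b_1 b_1'$, $v b_2$ form a copy of $H_1^{2,2,2,1}$ in $G'$, contradicting the assumption. The main obstacle is therefore handling collisions among $\{a_1', a_2', b_1'\}$. For instance, if $a_1' = a_2' = w$, I would enlarge $Z$ by adding $w$ and reapply Lemma~\ref{lem:small-cut} to the set $Z \cup \{w\}$; since $u, v$ still have all neighbours in the enlarged set, the at least three ``boundary'' vertices again lie among the pendants and their chosen auxiliaries, and I would use $C_3$-subgraph-freeness (forbidding any triangle through $w$ and $\{u, a_1, a_2\}$) to choose a different triple of pendant extensions with pairwise distinct external neighbours. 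Symmetric reasoning handles the remaining collision patterns ($a_1' = b_1'$, etc.), yielding in every case an embedded $H_1^{2,2,2,1}$.

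Once $S$ has been shown to be independent, the algorithmic step is immediate: test whether $G' \setminus S$ is disconnected; if it is, the stable set $S$ separates $G'$ into at least two non-empty components and is therefore a partially reflexive stable cut, so output \emph{yes}; otherwise, because $S$ is independent, every $v \in S$ has all its neighbours outside $S$, hence any subset $C \subseteq S$ leaves $G' \setminus C$ connected, so no subset of $S$ is a cut and we output \emph{no}. All steps, including the reductions from Lemmas~\ref{lem:gen-obs} and~\ref{lem:small-cut}, run in polynomial time, establishing the theorem.
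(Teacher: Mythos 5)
Your overall strategy is the same as the paper's: preprocess with Lemmas~\ref{lem:gen-obs} and~\ref{lem:small-cut}, show that the set $S$ of irreflexive vertices is independent, and then observe that a stable cut exists iff $S$ is a cutset. The final algorithmic step is fine. The gap is in the independence argument, and it occurs exactly where the paper has to work hardest.

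First, your assertion that $N(u),N(v)\subseteq Z$ is unjustified. You chose $a_1,a_2$ as \emph{two} neighbours of $u$ besides $v$, but nothing bounds the degree of $u$ or $v$ by $3$ in a $\{H_1^{2,2,2,1},C_3\}$-subgraph-free graph, so $u$ or $v$ may themselves be among the three boundary vertices that Lemma~\ref{lem:small-cut} produces. The paper's proof spends two of its three cases (its Case~2, where $p$ has an external neighbour, and Case~3, where both $b$ and $q$ do) on precisely this situation, and those cases are not dispatched by exhibiting a forbidden subgraph alone. Second, your treatment of collisions among $a_1',a_2',b_1'$ (``enlarge $Z$, reapply the lemma, choose a different triple'') is a sketch of the easy direction only; it is not true that every degenerate configuration ``yields in every case an embedded $H_1^{2,2,2,1}$.'' Some configurations are genuinely $\{H_1^{2,2,2,1},C_3\}$-free, and the paper excludes them using the \emph{non-subgraph} preprocessing guarantees: its Case~3 concludes by showing that the vertices $c,d,q'$ must all be reflexive with neighbourhoods contained in $\{q,a,b\}$, which contradicts Property~4 of Lemma~\ref{lem:gen-obs} (no reflexive vertex may have its neighbourhood contained in that of another reflexive vertex), and Case~1 repeatedly uses the ``distinct external neighbours, else a gateway pair'' consequence of Lemma~\ref{lem:small-cut}. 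Without carrying out this case analysis --- in particular the cases where a centre vertex has degree at least $4$, and the reflexivity/twin argument that closes the last case --- the claim that $S$ is independent is not established.
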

\begin{proof}
Let $G$ be some  $\{H_1^{2,2,2,1}, C_3\}$-subgraph-free graph. We first complete the following pre-processing steps outlined by Lemmas~\ref{lem:gen-obs} and \ref{lem:small-cut} with $k=10$.

We now claim $S$ is a stable set. Given $S$ contains all irreflexive vertices, $G$ has a partially reflexive stable cut if, and only if $S$ is also a cutset. Assume for contradiction there is some pair of adjacent irreflexive vertices $p,q$ from Property $1$ of Lemma~\ref{lem:gen-obs} both have degree $\geq 3$. As $G$ is $C_3$-subgraph-free, there are distinct vertices $a,b \in N(p)$ and $c,d \in N(q)$. Let $D=\{p,q,a,b,c,d\}$.

From Lemma~\ref{lem:small-cut}, there must be three vertices, $x,x',x'' \in D$ with some neighbour in $G \setminus D$. Further we may assume these are distinct neighbours in $G \setminus D$, else again there are some pair of gateway vertices. Without loss we may assume $x = a$ given $a,b,c,d$ are symmetric.

\begin{figure}
    \centering
    \begin{tikzpicture}[scale=1, every node/.style={circle, draw, fill=white, inner sep=3pt}]
      \node (p) at (0,2) {$p$};
      \node (q) at (2,2) {$q$};
      \node (a) at (-1,0) {$a$};
      \node[draw=black, very thick] (a') at (-1,-2) {$a'$};
      \node (b) at (0,0) {$b$};
      \node (c) at (1,0) {$c$};
      \node[draw=black, very thick] (c') at (1,-2) {$c'$};
      \node (d) at (2,0) {$d$};
      \node (q') at (3,0) {$q'$};
    
      \draw (p) -- (q);
      \draw (p) -- (a);
      \draw (p) -- (b);
      \draw (q) -- (c);
      \draw (q) -- (d);
      
      \draw (a) -- (a');
      \draw (c) -- (c');
      \draw (q) -- (q');
      \draw[very thick] (d) -- (c');
    
      \draw[bend left=45, dotted] (c) to (a);
      \draw[dotted] (c) to (a');

      \node (p2) at (7,2) {$p$};
      \node (q2) at (9,2) {$q$};
      \node (a2) at (6,0) {$a$};
      \node[draw=black, very thick] (a'2) at (6,-2) {$a'$};
      \node (b2) at (7,0) {$b$};
      \node (c2) at (8,0) {$c$};
      \node[draw=black, very thick] (c'2) at (8,-2) {$c'$};
      \node (d2) at (9,0) {$d$};
      \node (q'2) at (10,0) {$q'$};
    
      \draw (p2) -- (q2);
      \draw (p2) -- (a2);
      \draw (p2) -- (b2);
      \draw (q2) -- (c2);
      \draw (q2) -- (d2);
      
      \draw (a2) -- (a'2);
      \draw (c2) -- (c'2);
      \draw (q2) -- (q'2);
    
      \draw[bend left=35] (d2) to (a2);
      \draw (q'2) to (a'2);
    \end{tikzpicture}

\caption{\textbf{Case 1}, $x=a',x'=c'$. Left: If the edge $(d,c')$ exists, at least one of the dotted edges must exist. Right: Given $(d,c')$ is not an edge, both the edges $(a,d)$ and $(a',q')$ must exist.}
\label{fig:case1}
\end{figure}

In case 1 we assume $x'=c$, that is, there exist distinct vertices  $a' \in N(a) \setminus D$ and $c' \in N(c) \setminus D$. As $G$ is $H_1^{2,2,2,1}$-subgraph-free, $N(b) \subseteq D \cup \{a',c'\}$, symmetrically the same holds for $d$. Without loss we may assume $x'' =q$, that is there is some $q' \neq a' \neq c' \in N(q) \setminus D$. 

Further, $N(d) \subseteq \{q,a,a',c'\}$ else there is some $C_3$ or $H_1^{2,2,2,1}$ with degree 3 vertices $p,q$. Assume $d$ is adjacent to $c'$, see Figure~\ref{fig:case1} (left), now symmetrically $N(c) \subseteq \{q, a,a',c'\}$. As $N(c) \neq N(d)$ we may assume without loss $c$ is adjacent to either $a$ or $a'$. However both of these lead to a $H_1^{2,2,2,1}$ with degree $3$ vertices $c,q$. Therefore $N(d) \subseteq \{q, a,a'\}$ and symmetrically $N(q') \subseteq \{q, a,a'\}$. Both $d$ and $q'$ have degree at least $2$ and $N(d) \neq N(q')$ so without loss we may assume $a \in N(d)$ and $a' \in N(q')$, Figure~\ref{fig:case1} (right). This is a contradiction as there is some $H_1^{2,2,2,1}$ with degree 3 vertices $p,q$ concluding case 1.

\begin{figure}
    \centering
    \begin{tikzpicture}[scale=1, every node/.style={circle, draw, fill=white, inner sep=3pt}]
      \node (p) at (-1,2) {$p$};
      \node (q) at (1,2) {$q$};
      \node (a) at (-2,0) {$a$};
      \node (a')[draw=black, very thick] at (-2,-2) {$a'$};
      \node (b) at (-1,0) {$b$};
      \node (c) at (1,0) {$c$};
      \node (d) at (2,0) {$d$};
      \node (p')[draw=black, very thick] at (0,0) {$p'$};
    
      \draw (p) -- (q);
      \draw (p) -- (a);
      \draw (p) -- (b);
      \draw (q) -- (c);
      \draw (q) -- (d);
      
      \draw (a) -- (a');
      \draw (p) -- (p');
    
      \draw[bend left=40] (c) to (a);
      \draw (d) to (a');
    \end{tikzpicture}
\caption{\textbf{Case 2}, $x=a',x'=p'$. The edges $(a,c)$ and $(a',d)$ must exist else this is case 1.}
\label{fig:case2}
\end{figure}

For case 2 we assume $x' = p$, that is there are distinct vertices $a' \in N(a) \setminus D$ and $p' \in N(p) \setminus D$. Now $N(c) \subseteq D \cup \{p',a'\}$ else this is case 1. Likewise $c$ is not adjacent to $p'$ (symmetrically $b$) else again this is case 1 relabelling $p'$ as $c'$. Given $G$ is $C_3$-subgraph-free, $N(c) \subseteq \{q,a,a'\}$ and symmetrically $N(d) \subseteq \{q,a,a'\}$. As both $c$ and $d$ have degree at least $2$ and $N(c) \neq N(d)$ without loss $a \in N(c)$ and $a' \in N(d)$, given $c$ and $d$ are not necessarily incomparable, we do assume $a \notin N(d)$. See Figure~\ref{fig:case2}. We now consider the possible $x''$ vertices, claiming each result in something isomorphic to case 1. If $x'' =q$, there is some $q' \neq p' \neq q' \in N(q) \setminus D$, which is case 1 given the following relabelling: $$l_1 : (p,q,a,b,c,d,a',p',q') \to (p,q,a,b,a',c,c',p',d)$$ If $x''=b$, then $b' \neq p' \neq q' \in N(b) \setminus D$, again this is case 1 given the following relabelling: $$l_2 : (p,q,a,b,c,d,a',p',b') \to (p,q,b,a,d,c,c',p',a')$$ This concludes case 2.

\begin{figure}
    \centering
    \begin{tikzpicture}[scale=1, every node/.style={circle, draw, fill=white, inner sep=3pt}]
      \node (p) at (0,2) {$p$};
      \node (q) at (2,2) {$q$};
      \node (a) at (-1,0) {$a$};
      \node (a')[draw=black, very thick] at (-1,-2) {$a'$};
      \node (b) at (0,0) {$b$};
      \node (b')[draw=black, very thick] at (0,-2) {$b'$};
      \node (c) at (1,0) {$c$};
      \node (d) at (2,0) {$d$};
      \node (q')[draw=black, very thick] at (3,0) {$q'$};
    
      \draw (p) -- (q);
      \draw (p) -- (a);
      \draw (p) -- (b);
      \draw (q) -- (c);
      \draw (q) -- (d);
      
      \draw (a) -- (a');
      \draw (b) -- (b');
      \draw (q) -- (q');
    \end{tikzpicture}
    
    \caption{\textbf{Case 3}, $x=a',x'=b', x''=q'$}
    \label{fig:case3}
\end{figure}

This leaves only case 3 where $x'=b$ and $x'' = q$, that is there are distinct vertices $a' \in N(a) \setminus D$ and $b' \in N(b) \setminus D$ and  $q' \in N(q) \setminus D$, Figure~\ref{fig:case3}. As previously $N(c) \subseteq D \cup \{a',b',q'\}$ else this is case 1. Further if $c$ is adjacent to $b'$, then relabelling $b'$ to $c'$ shows again we are in case 1. Symmetrically, $c$ is not adjacent to $a'$ and given $G$ is $C_3$-subgraph-free, $N(c) \subseteq \{q,a,b\}$. As $N(c) \subseteq N(p)$, $c$ must be reflexive. Notice $c,d,q'$ are symmetric cases so all must be reflexive and $N(\{c,d,q'\}) \subseteq \{q,a,b\}$. Given every vertex has degree at least $2$ there must be some pair $v \neq v' \in \{c,d,q'\}$ , such that $N(v) \subseteq N(v')$. However as $c,d,q'$ are each reflexive this is a contradiction. 
\end{proof}
Together with Corollary~\ref{cor:main-stable-bis}, the previous theorem yields another proof of Theorem~\ref{t-3}.

\end{document}